\newcommand{\set}[2]{\left\{ #1 \mid #2 \right\}}
\newcommand{\mfrac}[2]{#1 / #2}
\newcommand{\mand}{\text{ and }}
\newcommand{\eand}{\qquad \text{and} \qquad}
\newcommand{\Ni}{\mathbb N_1}
\newcommand{\pref}[1]{\mathrm{pref}_{#1}}
\newcommand{\suff}[1]{\mathrm{suff}_{#1}}
\newcommand{\rfact}[2]{\mathrm{rfact}_{#2}^{#1}}
\newcommand{\kae}[1]{\sim_{#1}}
\newcommand{\factors}[1]{\mathcal{F}_{#1}}
\newcommand{\fc}[2]{\mathcal{P}_{#2}^{(#1)}}
\newcommand{\fcsturm}[1]{q^{(#1)}}
\newcommand{\fcu}[2]{\mathcal{U}_{#2}^{(#1)}}
\newcommand{\fcl}[2]{\mathcal{L}_{#2}^{(#1)}}
\begin{document}

\title{
Variations of the Morse-Hedlund Theorem for $k$-Abelian Equivalence
}

\author{
Juhani Karhum\" aki
\inst{1}\thanks{
Partially supported by the Academy of Finland under grants 251371 and 257857.
}
\and
Aleksi Saarela
\inst{1}$\mbox{}^\star$
\and
Luca Q. Zamboni
\inst{1}\inst{2}\thanks{
Partially supported by a FiDiPro grant (137991) from the Academy of Finland and by ANR grant {\sl SUBTILE}.
}
}

\institute{
Department of Mathematics and Statistics \& FUNDIM,
University of Turku,
FI-20014 Turku, Finland
\and
Universit\'e de Lyon,
Universit\'e Lyon 1, CNRS UMR 5208,
Institut Camille Jordan,
43 boulevard du 11 novembre 1918,
F69622 Villeurbanne Cedex, France
\and
\email{karhumak@utu.fi},
\email{amsaar@utu.fi},
\email{lupastis@gmail.com}
}

\maketitle

\begin{abstract}
In this paper we investigate local to global phenomena for a new family of complexity functions of infinite words indexed by $k \in \Ni \cup \{+\infty\}$ where $\Ni$ denotes the set of positive integers. Two finite words $u$ and $v$ in $A^*$ are said to be $k$-Abelian equivalent if for all $x \in A^*$ of length less than or equal to $k$, the number of occurrences of $x$ in $u$ is equal to the number of occurrences of $x$ in $v$. This defines a family of equivalence relations $\sim_k$ on $A^*$, bridging the gap between the usual notion of Abelian equivalence (when $k = 1$) and equality (when $k = +\infty$). Given an infinite word $w \in A^\omega$, we consider the associated complexity function $\mathcal P^{(k)}_w : \Ni \rightarrow \Ni$ which counts the number of $k$-Abelian equivalence classes of factors of $w$ of length $n$. As a whole, these complexity functions have a number of common features: Each gives a characterization of periodicity in the context of bi-infinite words, and each can be used to characterize Sturmian words in the framework of aperiodic one-sided infinite words. Nevertheless, they also exhibit a number of striking differences, the study of which is one of the main topics of our paper.
\end{abstract}

\section{Introduction}

A fundamental problem in both mathematics
and  computer science is to describe local constraints which imply global regularities.
A splendid example of this phenomena may be found in the framework of combinatorics on words.
In their seminal papers \cite{MoHe38,MoHe40}, G. A. Hedlund and M. Morse proved that a
bi-infinite word $w$ is periodic if and only if for some positive integer $n,$ $w$ contains at most $n$ distinct factors of length $n.$
In other words, it describes the exact borderline between periodicity and aperiodicity of words in terms of the {\it factor complexity function} which counts the number of distinct factors of each length $n.$ An analogous result was established some thirty years later by E. Coven and G.A. Hedlund in the framework of Abelian equivalence. They show that a bi-infinite word is periodic if and only if for some positive integer $n$ all factors of $w$ are Abelian equivalent. Thus once again it is possible to distinguish  between periodic and aperiodic words on a local level by counting the number of Abelian equivalence classes of factors of length $n.$

In this paper we study the local to global behavior for a  new family of complexity functions $\fc{k}{w}$ of infinite words indexed by $k \in \Ni \cup \{+\infty\}$ where $\Ni=\{1,2,3,\ldots \}$ denotes the set of positive integers. Let $k \in \Ni  \cup \{+\infty\}$ and $A$ be a finite non-empty set. Two finite words $u$ and $v$ in $A^*$ are said to be $k$-{\it Abelian equivalent} if for all $x\in A^*$ of length less than or equal to $k,$ the number of occurrences of $x$ in $u$ is equal to the number of occurrences of $x$ in  $v.$ This defines a family of  equivalence relations $\thicksim_k$ on $A^*,$ bridging the gap between the usual notion of Abelian equivalence (when $k=1$) and equality (when $k=+\infty).$  Abelian equivalence of words has long been a subject of great interest (see for instance Erd\"os problem, \cite{CaRiSaZa11,CoHe73,CuRa11abelian,De79,Ke92,PuZa13,RiSaZa10,RiSaZa11,Sa09jalc}).
Although the notion of $k$-Abelian equivalence is quite new, there are already a number of recent papers on the topic
\cite{HuKaSaSa11maurer,HuKa11,HuKaSa12ehrenfeucht,KaPuSa12dlt,Mesa13dlt,KaSaZa_jcta}.

Given an infinite word $w \in A^\omega,$
we consider the associated complexity function $\fc{k}{w} :\Ni \rightarrow \Ni$ which counts the number of  $k$-Abelian equivalence classes of factors of $w$ of length $n.$ Thus $\fc{\infty}{w}$ corresponds to the usual factor complexity while $\fc{1}{w}$ corresponds to Abelian complexity.
As it turns out, each  intermediary complexity function  $\fc{k}{w}$ for $2\leq k <+\infty$ can be used to detect periodicity of words. In order to describe this connection, we will make use of the following
auxiliary function  first
discovered in \cite{KaSaZa_jcta}:
\begin{equation*}
  \fcsturm{k}(n) = \begin{cases}
      n + 1 & \text{if $n \leq 2k - 1$} \\
      2k & \text{if $n \geq 2k$}
  \end{cases}.
\end{equation*}

\noindent As a starting point of our research, we list two classical results on factor and Abelian complexity in connection with periodicity, and their $k$-Abelian counterparts proved by the authors in
\cite{KaSaZa_jcta}. We note that in each case, the first two items are included in the third.

\begin{theorem}\label{MH}
Let $w$ be a bi-infinite word over a finite alphabet. Then the following properties hold:
\begin{itemize}
\item (M. Morse, G.A. Hedlund, \cite{MoHe38})
The word $w$ is periodic if and only if  $\fc{\infty}{w}(n)\leq n$ for some $n\geq 1$.
\item (E.M. Coven, G.A. Hedlund, \cite{CoHe73})
The word $w$ is periodic  if and only if  $\fc{1}{w}(n)=1$ for some $n\geq 1$.
\item  The word $w$ is periodic if and only if $\fc{k}{w}(n)<\fcsturm{k}(n)$ for some $k\in \Ni\cup \{+\infty\}$ and $n\geq 1.$
\end{itemize}
\end{theorem}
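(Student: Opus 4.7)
If $w$ is bi-infinite periodic with least period $p$, then $\fc{\infty}{w}(n) \leq p$ for every $n$, and in particular $\fc{\infty}{w}(p) \leq p < p+1 = \fcsturm{\infty}(p)$, so the inequality holds with $k = +\infty$ and $n = p$. For the converse I would prove the contrapositive: if $w$ is aperiodic, then $\fc{k}{w}(n) \geq \fcsturm{k}(n)$ for every $k$ and every $n$.

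\textbf{Easy cases.} For $k = +\infty$ the bound $\fc{\infty}{w}(n) \geq n+1 = \fcsturm{\infty}(n)$ is exactly Morse--Hedlund. For finite $k$ and $n \leq k$ the relation $\sim_k$ collapses to equality on words of length at most $k$: if $|u| = |v| \leq k$ and $u \kae{k} v$, then $u$ is itself a pattern of length $\leq k$, occurring once in $u$ and hence also once in $v$, forcing $u = v$. Thus $\fc{k}{w}(n) = \fc{\infty}{w}(n) \geq n+1 = \fcsturm{k}(n)$ by Morse--Hedlund; in particular this already furnishes the base case $\fc{k}{w}(k) \geq k+1$ for the main argument below.

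\textbf{Main case, $k$ finite and $n > k$.} Here I would exploit the standard structural description of $\kae{k}$ on words of length at least $k-1$: two such words are $k$-Abelian equivalent iff they share the same $(k-1)$-prefix $\pref{k-1}$, the same $(k-1)$-suffix $\suff{k-1}$, and the same Parikh vector on factors of length $k$. The goal is to show that if $w$ is aperiodic then $\fc{k}{w}(n) \geq n+1$ throughout $k \leq n \leq 2k-1$ and $\fc{k}{w}(n) \geq 2k$ for $n \geq 2k$. A natural first attempt is induction on $n$ via a strict-growth statement, but $\fc{k}{w}$ is not monotone in general (one easily exhibits distinct $k$-Abelian classes of length $n$ whose one-letter right-extensions coincide, e.g.\ $aab$ and $aba$ extended by $a$ for $k=2$), so the na\"ive extension argument fails. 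My plan instead is to analyse the Rauzy graph of order $k-1$ of $w$ and to count, on top of the $\pref{k-1}$/$\suff{k-1}$ data, the independent degrees of freedom contributed by the Parikh vector on length-$k$ factors; seeded by the base case above, this should yield the required $\fcsturm{k}$ bound.

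\textbf{Main obstacle.} The crux is the plateau $\fcsturm{k}(n) = 2k$: beyond $n = 2k-1$ the value $\fc{k}{w}$ can genuinely stabilize (Sturmian words realize $\fc{k}{w} = \fcsturm{k}$ with equality), so the argument must deliver exactly the bound $2k$ and no more. Converting aperiodicity into the structural richness needed to pin down this number -- essentially, isolating a monotone invariant of $\kae{k}$ whose number of values is forced to reach $\fcsturm{k}(n)$ in the aperiodic case -- is the heart of the proof, and is presumably where the machinery developed in~\cite{KaSaZa_jcta} is indispensable.
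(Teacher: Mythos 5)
Your forward direction and your treatment of the easy ranges are correct: the observation that $\kae{k}$ restricted to words of length at most $k$ is just equality (take $x=u$ in the definition) cleanly gives $\fc{k}{w}(n)=\fc{\infty}{w}(n)\geq n+1=\fcsturm{k}(n)$ for $n\leq k$ via Morse--Hedlund, and your counterexample to monotonicity of $\fc{k}{w}$ is right. However, the proof has a genuine gap exactly where you flag it: the entire content of the third item beyond the classical results is the claim that aperiodicity forces $\fc{k}{w}(n)\geq n+1$ for $k<n\leq 2k-1$ and $\fc{k}{w}(n)\geq 2k$ for $n\geq 2k$, and for this you offer only a plan (Rauzy graphs of order $k-1$ plus counting degrees of freedom in the length-$k$ Parikh vector) rather than an argument. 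Since, as you yourself note, the naive extension/induction strategy fails because the number of $k$-Abelian classes can decrease when passing from length $n$ to $n+1$, some substitute mechanism is needed to propagate the lower bound upward from the base case at $n=k$, and none is supplied. In particular nothing in the proposal rules out an aperiodic bi-infinite word with, say, $\fc{k}{w}(n)=k+1$ for all large $n$; excluding this is the theorem.

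For what it is worth, the paper itself contains no proof of this statement either: items one and two are quoted from Morse--Hedlund and Coven--Hedlund, and the third item is explicitly attributed to the authors' companion paper \cite{KaSaZa_jcta}, so there is no in-paper argument to compare your sketch against. Your reduction to the contrapositive and the identification of $n>k$ (finite $k$) as the only nontrivial case match the natural structure of such a proof, but as submitted the argument establishes only the periodic $\Rightarrow$ low complexity direction and the sub-$k$ range of the converse, not the theorem.
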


\noindent
Also, each complexity provides a characterization for an important class of binary words, the so-called \textit{Sturmian} words:
%

\begin{theorem}\label{SturmianChar}
Let $w$ be an aperiodic one-sided infinite word. Then the following properties hold:
\begin{itemize}
\item (M. Morse, G.A. Hedlund, \cite{MoHe40}). The word $w$ is Sturmian if and only if  $\fc{\infty}{w}(n) = n+1$ for all $n \geq 1$.
\item (E.M. Coven, G.A. Hedlund, \cite{CoHe73}). The word $w$ is Sturmian if and only if $\fc{1}{w}(n)=2$ for all $n\geq 1$.
\item  The word $w$ is Sturmian if and only if $\fc{k}{w}(n)=\fcsturm{k}(n)$ for all $k\in \Ni\cup \{+\infty\}$ and $n\geq 1.$
\end{itemize}
\end{theorem}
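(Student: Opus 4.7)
The plan is to prove the third (most general) item; the first two follow by specialising to $k=+\infty$ and $k=1$ respectively. The reverse implication is essentially free: if $\fc{k}{w}(n)=\fcsturm{k}(n)$ for every $k$ and $n$, then setting $k=+\infty$ yields $\fc{\infty}{w}(n)=n+1$ for all $n$, and the classical Morse--Hedlund theorem (first item) forces $w$ to be Sturmian. So the real work lies in the forward direction.

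Fix a Sturmian $w$ and $k\in\Ni\cup\{+\infty\}$, and split on the range of $n$. For $n\leq k-1$ the equivalence $\kae{k}$ coincides with equality on words of length at most $k-1$, so $\fc{k}{w}(n)=\fc{\infty}{w}(n)=n+1=\fcsturm{k}(n)$. For $n\geq k$ I would model factors of length $n$ as walks of length $n-k+1$ in the Rauzy graph $G$ of order $k-1$ of $w$. For a Sturmian word, $G$ has exactly $k$ vertices and $k+1$ edges, with a unique right-special vertex (out-degree $2$) and a unique left-special vertex (in-degree $2$), every other vertex being deterministic in both directions. A standard lemma on $\kae{k}$ says that two factors of length at least $k-1$ are $k$-Abelian equivalent iff the corresponding walks share source, sink, and multiset of edges traversed, so $\fc{k}{w}(n)$ equals the number of distinct $(\text{source},\text{sink},\text{edge-multiset})$ triples realised by walks of length $n-k+1$ in $G$.

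The remaining task is a combinatorial count inside this Sturmian-specific graph. For $k\leq n\leq 2k-1$ the walks are too short to traverse the unique nontrivial cycle of $G$ more than once, and one checks that each of the $n+1$ distinct factors realises its own triple, giving $\fc{k}{w}(n)=n+1$. For $n\geq 2k$ walks become long enough to traverse cycles repeatedly and the edge-multiset carries essentially all the information; one then verifies that after the identifications forced by the rigid structure of $G$ exactly $2k$ triples remain. The main obstacle I anticipate is this stabilisation, and in particular treating separately the two structural subcases (the right- and left-special vertices either coincide at a bispecial vertex or are distinct) and showing that the count becomes exactly $2k$ starting already from $n=2k$, rather than only asymptotically.
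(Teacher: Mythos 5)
The paper itself does not prove this theorem: the third item is quoted from the authors' companion paper \cite{KaSaZa_jcta}, and the first two are the cited classical results of Morse--Hedlund and Coven--Hedlund. So your proposal has to be judged on its own merits, and there it has a genuine gap: the entire content of the statement for $n \geq 2k$ is the claim that a Sturmian word has exactly $2k$ equivalence classes, and you assert this (``one then verifies \dots exactly $2k$ triples remain'') rather than prove it. Your reduction to counting $(\text{source},\text{sink},\text{edge-multiset})$ triples of walks in the order-$(k-1)$ Rauzy graph is correct and is a viable route --- it matches the characterization of $\kae{k}$ via $\pref{k-1}$, $\suff{k-1}$ and the counts of length-$k$ factors --- but the stabilisation at $2k$ from $n=2k$ onward, including the bispecial versus non-bispecial subcases you flag, is precisely the hard part and is left undone. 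Until that count is carried out, the forward direction is not established.

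Two smaller points. First, your argument for $k \leq n \leq 2k-1$ is right in its conclusion but wrong in its stated reason: walks of length $n-k+1 \leq k$ can traverse a short cycle of the Rauzy graph many times (e.g.\ the factor $0^n$ loops at the vertex $0^{k-1}$ up to $k$ times). The correct and simpler argument is that for $n \leq 2k-2$ the prefix and suffix of length $k-1$ cover every position, and for $n = 2k-1$ the one uncovered position is forced by Abelian equivalence; hence $\fc{k}{w}(n) = \fc{\infty}{w}(n) = n+1$ for $n \leq 2k-1$, with no Sturmian-specific graph analysis needed. Second, your claim that the first two items ``follow by specialising'' is false for the reverse direction of the second item: knowing only $\fc{1}{w}(n)=2$ for all $n$ gives you no access to your $k=+\infty$ argument, and aperiodicity plus constant Abelian complexity $2$ implies Sturmian only via the Coven--Hedlund theorem itself. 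Relatedly, the paper's remark that the first two items are ``included in the third'' indicates the intended reading is per-$k$ (for each fixed $k$, equality of $\fc{k}{w}$ with $\fcsturm{k}$ characterizes Sturmian words among aperiodic words); under that reading the reverse implication for finite $k$ is itself a nontrivial result of \cite{KaSaZa_jcta} that your $k=+\infty$ shortcut does not reach.
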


However, in other respects,  these various complexities  exhibit radically different behaviors. For instance, in the context of one-sided infinite words, the first item  in Theorem~\ref{MH} gives rise to a characterization of ultimately periodic words, while for the other two, the result holds in only one direction: If  $\fc{k}{w}(n)<\fcsturm{k}(n)$ for some $k\in \Ni$ and $n\geq 1$ then $w$ is ultimately periodic, but not conversely (see \cite{KaSaZa_jcta}).
For instance in the simplest case when $k=1,$ it is easy to see that if $w$ is the ultimately periodic word $01^\omega,$ then for each positive integer $n$ there are precisely two Abelian classes of factors of $w$ of length $n.$  However, the same is true of the (aperiodic) {\it Fibonacci infinite word}
\[w=010010100100101001\ldots\]
defined as the fixed point of the morphism $0\mapsto 01,$ $1\mapsto 0.$
Analogously, in Theorem~\ref{SturmianChar} the first item holds true without the added assumption that $w$ be aperiodic, while the other two items do not. Another striking difference between them is in their rate of growth.
Consider for instance the binary Champernowne word
\[ {\mathcal C} =01101110010111011110001001\ldots\]
obtained by concatenating the binary representation
of the consecutive natural numbers. Let $w$ denote the morphic image of $\mathcal C$
under the Thue-Morse morphism $\tau$ defined by $0\mapsto 01$ and $1\mapsto 10$.  Then while $\fc{\infty}{w}(n)$ has exponential growth, it can be shown that $\fc{1}{w}(n)\leq 3$ for all $n.$
Yet another fundamental disparity concerns the difference $\fc{k}{w}(n+1)- \fc{k}{w}(n).$
For the factor complexity, one always has $\fc{\infty}{w}(n+1)- \fc{\infty}{w}(n)\geq 0,$ while  for general $k$ this inequality is far from being true.

A primary objective in this paper is to study the asymptotic lower and upper complexities
defined by
\begin{equation*}
  \fcl{k}{w}(n) = \min_{m \geq n} \fc{k}{w}(m)
  \eand
  \fcu{k}{w}(n) = \max_{m \leq n} \fc{k}{w}(m).
\end{equation*}

Surprisingly these quantities might deviate from one another quite drastically.
Indeed,
one of our main results is to compute these values for the famous Thue-Morse word.
We show that the upper limit is logarithmic,
while the lower limit is just constant, in fact at most $8$ in the case $k=2.$ This is quite unexpected considering the Thue-Morse word is both pure morphic and Abelian periodic (of period $2).$
If we however allow more general words, then
we obtain much stronger evidence of the non-existence of gaps
in low $k$-Abelian complexity classes.
We construct  uniformly recurrent infinite words
having arbitrarily low upper limit and just constant lower limit.
The concept of $k$-Abelian complexity
also leads to many interesting open questions.
We conclude the paper in Sect. \ref{sec:conclusion}
by mentioning some of these problems.

\section{Preliminaries}

Let $\Sigma$ be a finite non-empty set called the \emph{alphabet}.
The set of all finite words over $\Sigma$ is denoted by $\Sigma^*$ and
the set of all (right) infinite words is denoted by $\Sigma^\omega$.
The set of positive integers is denoted by $\Ni$.

Let $w \in \Sigma^\omega$.
The word $w$ is \emph{periodic}
if there is $u \in \Sigma^*$ such that $w = u^\omega$,
and \emph{ultimately periodic}
if there are $u, v \in \Sigma^*$ such that $w = v u^\omega$.
If $w$ is not ultimately periodic, then it is \emph{aperiodic}.
Let $u = a_0 \dots a_{m - 1}$ and $a_0, \dots, a_{m - 1} \in \Sigma$.
The \emph{prefix} of length $n$ of $u$ is
$\pref{n}(u) = a_0 \dots a_{n - 1}$ and
the \emph{suffix} of length $n$ of $u$ is
$\suff{n}(u) = a_{m - n} \dots a_{m - 1}$.
If $0 \leq i \leq m$, then the notation
$\rfact{i}{n}(u) = a_i \dots a_{i + n - 1}$
is used.
The length of a word $u$ is denoted by $|u|$ and
the number of occurrences of another word $x$ as a factor of $u$ by $|u|_x$.
Two words $u, v \in \Sigma^*$ are \emph{Abelian equivalent}
if $|u|_a = |v|_a$ for all $a \in \Sigma$.

Let $k \in \Ni$.
Two words $u, v \in \Sigma^*$ are \emph{$k$-Abelian equivalent}
if $|u|_x = |v|_x$ for all words $x$ of length at most $k$.
$k$-Abelian equivalence is denoted by $\kae{k}$.
If the length of $u$ and $v$ is at least $k - 1$,
then $u \kae{k} v$ if and only if
$|u|_x = |v|_x$ for all words $x$ of length $k$
and $\pref{k - 1}(u) = \pref{k - 1}(v)$
and $\suff{k - 1}(u) = \suff{k - 1}(v)$.

Let $w \in \Sigma^\omega$.
The set of factors of $w$ of length $n$ is denoted by $\factors{w}(n)$.
The \emph{factor complexity} of $w$ is the function
\begin{math}
   \fc{\infty}{w}: \Ni \to \Ni
\end{math}
defined by
\begin{equation*}
   \fc{\infty}{w}(n) = \# \factors{w}(n).
\end{equation*}
Let $k \in \Ni$.
The \emph{$k$-Abelian complexity} of $w$ is the function
\begin{math}
   \fc{k}{w}: \Ni \to \Ni
\end{math}
defined by
\begin{equation*}
   \fc{k}{w}(n) = \# (\factors{w}(n) / \kae{k}).
\end{equation*}

Factor complexity functions are always increasing,
and even strictly increasing for aperiodic words.
For $k$-Abelian complexity this is not true.
This is why we define
\emph{upper $k$-Abelian complexity} $\fcu{k}{w}$ and
\emph{lower $k$-Abelian complexity} $\fcl{k}{w}$ by
\begin{equation*}
   \fcu{k}{w}(n) = \max_{m \leq n} \fc{k}{w}(m)
   \eand
   \fcl{k}{w}(n) = \min_{m \geq n} \fc{k}{w}(m).
\end{equation*}
These two functions can be significantly different.
For example, if $w$ is the Thue-Morse word and $k \geq 2$,
then $\fcu{k}{w}(n) = \Theta(\log n)$ and $\fcl{k}{w}(n) = \Theta(1)$.
This will be proved in Sect. \ref{sec:tm}.

The Abelian complexity of a binary word $w \in \{0, 1\}^\omega$
can be determined using the formula
\begin{equation} \label{eq:abelcomplbinary}
   \fc{1}{w}(n)
   = \max \set{|u|_1}{u \in \factors{n}(w)} - \min \set{|u|_1}{u \in \factors{n}(w)} + 1.
\end{equation}

For $k \in \Ni \cup \{\infty\}$, let
\begin{math}
   \fcsturm{k}: \Ni \to \Ni
\end{math}
be the function defined by
\begin{equation*}
   \fcsturm{k}(n) = \begin{cases}
       n + 1 & \text{if $n \leq 2k - 1$} \\
       2k & \text{if $n \geq 2k$}
   \end{cases}.
\end{equation*}
The significance of this function is that if $w$ is Sturmian,
then $\fc{k}{w} = \fcsturm{k}$. This is further discussed in Sect. \ref{sec:minimal}.

There are large classes of words for which
the $k$-Abelian complexities are of the same order for many values of $k$.
This is shown in the next two lemmas.
Thus when analyzing the growth rate of the $k$-Abelian complexity of a word,
it may be sufficient to analyze the Abelian or 2-Abelian complexity.

\begin{lemma} \label{lem:fckfc1}
Let $w \in \{0,1\}^\omega$ be such that
every factor of $w$ of length $k$ contains at most one occurrence of 1.
Then
\begin{math}
   \fc{k}{w}(n) = \Theta(\fc{1}{w}(n)).
\end{math}
\end{lemma}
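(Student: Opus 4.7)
The plan is to sandwich $\fc{k}{w}(n)$ between $\fc{1}{w}(n)$ and $k^2\,\fc{1}{w}(n)$. The lower bound $\fc{k}{w}(n) \geq \fc{1}{w}(n)$ is immediate, since $\kae{k}$ refines $\kae{1}$ and every Abelian class of factors of length $n$ is a union of $k$-Abelian classes. All the content lies in proving the upper bound.

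The essential structural fact I would extract from the hypothesis is that any two $1$'s occurring in a factor of $w$ are separated by at least $k-1$ zeros. This yields two useful consequences: every factor of $w$ of length $k-1$ contains at most one $1$, so there are at most $k$ choices each for $\pref{k-1}(u)$ and $\suff{k-1}(u)$; and every length-$k$ factor of $w$ is either $0^k$ or has the form $0^a 1 0^{k-1-a}$ for a unique $a \in \{0,\ldots,k-1\}$. The key claim I would prove is that, for $n \geq k-1$, the $k$-Abelian class of $u \in \factors{w}(n)$ is determined by the triple $(\pref{k-1}(u),\, \suff{k-1}(u),\, |u|_1)$. Granting the claim, the bound $\fc{k}{w}(n) \leq k^2\, \fc{1}{w}(n)$ follows, since the Abelian class of a binary word is determined by $|u|_1$.

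To verify the claim I would invoke the characterization recalled in the preliminaries: for $|u|, |v| \geq k-1$, $u \kae{k} v$ iff their prefixes and suffixes of length $k-1$ agree and $|u|_x = |v|_x$ for every $x$ of length $k$. By the structural observation it suffices to determine each count $|u|_{0^a 1 0^{k-1-a}}$, because $|u|_{0^k}$ is then forced by $n-k+1 = \sum_x |u|_x$. Separation ensures that every length-$k$ window of $u$ containing a $1$ at position $p$ automatically has the prescribed shape, so
\begin{equation*}
  |u|_{0^a 1 0^{k-1-a}} = |u|_1 - |\pref{a}(u)|_1 - |\suff{k-1-a}(u)|_1,
\end{equation*}
and the right-hand side depends only on the triple (the two correction terms live inside $\pref{k-1}(u)$ and $\suff{k-1}(u)$ respectively).

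The heart of the argument, and the only step that really uses the hypothesis, is the separation observation; once in hand the rest is routine inclusion-exclusion. The minor technical wrinkle is that $\pref{k-1}(u)$ and $\suff{k-1}(u)$ overlap when $n \leq 2k-3$, but these finitely many lengths contribute only boundedly many classes and are absorbed into the multiplicative constant in the $\Theta$ statement.
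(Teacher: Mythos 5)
Your proof is correct and follows essentially the same route as the paper's: the paper's entire proof is the observation that under the hypothesis two factors are $k$-Abelian equivalent iff they are Abelian equivalent and share prefixes and suffixes of length $k-1$, which is exactly your key claim with the triple $(\pref{k-1}(u), \suff{k-1}(u), |u|_1)$. You simply supply the verification (the separation property and the counting formula for $|u|_{0^a 1 0^{k-1-a}}$) that the paper leaves implicit.
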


\begin{proof}
Two factors of $w$ are $k$-Abelian equivalent if and only if
they are Abelian equivalent and
have the same prefixes and suffixes of length $k - 1$.
\qed\end{proof}

\begin{lemma} \label{lem:fixedpointuniform}
Let $k, m \geq 2$ and let $w$ be a fixed point of an $m$-uniform morphism $h$.
Let $i$ be such that $m^i \geq k - 1$.
Then
\begin{math}
   \fc{k}{w}(m^i (n + 1)) = O(\fc{2}{w}(n)).
\end{math}
\end{lemma}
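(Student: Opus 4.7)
The plan is to exploit the $h^i$-block structure of $w$: a factor $u$ of length $m^i(n+1)$ spans the $h^i$-images of at most $n+2$ consecutive letters of $w$, and counting length-$k$ factors of $u$ will reduce---up to a bounded amount of boundary data---to counting letters and $2$-letter factors in a shorter factor $v$ of $w$ of length $n$.

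Concretely, I would first pick an occurrence of $u$ in $w$ at some position $p$ and write $p = m^i q + r$ with $0 \le r < m^i$. Because $w$ is a fixed point of the $m^i$-uniform morphism $h^i$, this yields a decomposition
\[
   u \;=\; s \cdot h^i(v) \cdot t,
\]
where $s$ is the suffix of $h^i(w_q)$ of length $m^i - r$, the word $v = w_{q+1}\cdots w_{q+n}$ is a factor of $w$ of length $n$, and $t$ is the prefix of $h^i(w_{q+n+1})$ of length $r$.

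The central step is to show that the $k$-Abelian class of $u$ is determined by the quadruple $(r, w_q, w_{q+n+1}, [v]_{\kae{2}})$. The assumption $m^i \ge k-1$ together with $r < m^i$ implies that every length-$k$ factor of $u$ meets at most two of the consecutive pieces $s, h^i(w_{q+1}), \ldots, h^i(w_{q+n}), t$, and similarly that the prefix and suffix of $u$ of length $k-1$ are contained in $s \cdot h^i(w_{q+1})$ and $h^i(w_{q+n}) \cdot t$ respectively. I would then split $|u|_x$ for each word $x$ of length $k$ into three parts: occurrences lying entirely in an interior piece $h^i(w_{q+j})$, which sum to $\sum_{a \in \Sigma} |v|_a \cdot |h^i(a)|_x$ and depend only on the letter counts of $v$; occurrences straddling two consecutive interior pieces, which sum to $\sum_{a,b \in \Sigma} |v|_{ab} \cdot d_x(a,b)$ for constants $d_x(a,b)$ depending only on $h^i$ and $x$; and occurrences touching $s$ or $t$, which by the boundary bound reach at most into $h^i(w_{q+1})$ or $h^i(w_{q+n})$, and so depend only on $r, w_q, w_{q+n+1}$, and the first and last letters of $v$. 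All of these quantities are preserved by $\kae{2}$-equivalence of $v$.

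This yields $\fc{k}{w}(m^i(n+1)) \le m^i \cdot (\#\Sigma)^2 \cdot \fc{2}{w}(n) = O(\fc{2}{w}(n))$, since $k$, $m$, $i$, and $\Sigma$ are constants. The main obstacle will be the careful boundary bookkeeping---verifying that length-$k$ factors of $u$ which meet $s$ or $t$ cannot reach past one full interior piece of the middle---which is exactly what $m^i \ge k-1$ and $r < m^i$ are used for.
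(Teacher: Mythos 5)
Your proof is correct and follows essentially the same route as the paper's: the paper also decomposes each factor of length $m^i(n+1)$ as $p\,h^i(v)\,q$ with $|pq| = m^i$ and observes that the $k$-Abelian class is determined by $p$, $q$, and the 2-Abelian class of $v$. Your write-up just supplies the bookkeeping (interior, straddling, and boundary occurrences of length-$k$ factors) that the paper leaves implicit.
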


\begin{proof}
Every factor of $w$ of length $m^i (n + 1)$ can be written as $p h^i(u) q$,
where $u$ is a factor of $w$ of length $n$ and $|pq| = m^i$.
The $k$-Abelian equivalence class of $p h^i(u) q$
is determined by $p$, $q$ and the 2-Abelian equivalence class of $u$.
\qed\end{proof}

In particular,
Lemma \ref{lem:fixedpointuniform} can be applied to the Thue-Morse word
to analyze its $k$-Abelian complexity
once the behavior of its 2-Abelian complexity is known.

It has been shown that there are many words for which
the $k$-Abelian and $(k + 1)$-Abelian complexities are similar,
but there are also many words for which they are very different.
For example,
there are words having bounded $k$-Abelian complexity but
linear $(k + 1)$-Abelian complexity.
These words can even be assumed to be $k$-Abelian periodic.
This is shown in the next lemma.

\begin{lemma} \label{lem:perlin}
For every $k \geq 1$
there is a $k$-Abelian periodic word $w$ such that
\begin{math}
   \fc{k + 1}{w}(n) = \Theta(n).
\end{math}
\end{lemma}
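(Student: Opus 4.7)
The plan is to realize $w$ as the image $h(s)$ of an infinite binary word $s$ under an $L$-uniform morphism $h$ whose two block images $u_0 := h(0)$ and $u_1 := h(1)$ are $k$-Abelian equivalent but not $(k+1)$-Abelian equivalent. The natural factorization $w = h(s_1) h(s_2) h(s_3) \cdots$ then exhibits $w$ as a concatenation of length-$L$ blocks all lying in a single $k$-Abelian class, so $w$ is $k$-Abelian periodic of period $L$ at no cost. The complexity conclusion $\fc{k+1}{w}(n) = \Theta(n)$ will follow from a careful choice of gadgets that force prefix/suffix agreement, together with a linear Abelian complexity assumption on $s$.

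For the gadget I would use a sandwich $u_0 = y z_0 y'$, $u_1 = y z_1 y'$ with $|y| = |y'| = k$ and $(z_0, z_1)$ any pair of equal length satisfying $z_0 \kae{k} z_1$ and $z_0 \not\kae{k+1} z_1$ (for $k = 1$ one may take $z_0 = 0011$, $z_1 = 0101$, and analogous explicit pairs exist for each $k$). The padding enforces $\pref{k}(u_0) = \pref{k}(u_1) = y$ and $\suff{k}(u_0) = \suff{k}(u_1) = y'$, so every length-$(k+1)$ window of $w$ crossing a boundary between two consecutive blocks is determined solely by $y$, $y'$ and the split point, independent of which two blocks meet. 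For $s$ I would pick any binary word with linear Abelian complexity $\fc{1}{s}(m) = \Theta(m)$; a concrete example is the staircase $s = 0^1 1^1 0^2 1^2 0^3 1^3 \cdots$, whose length-$n$ factors attain every value in $\{0, 1, \ldots, n\}$ for the number of $0$'s, as long as $n$ is at most the current block size.

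With these ingredients the complexity count is a calculation. Fix any length-$(k+1)$ word $x$ with $|u_0|_x \ne |u_1|_x$ (which exists because $u_0 \not\kae{k+1} u_1$). For an aligned factor $h(v) = u_{v_1} \cdots u_{v_m}$ of $w$, the interior contribution to $|h(v)|_x$ is $|u_0|_x \cdot |v|_0 + |u_1|_x \cdot |v|_1$, and by the prefix/suffix agreement the boundary contribution equals $(m - 1) c$ for a constant $c$ depending only on $x, y, y'$. Setting $\alpha := |u_0|_x - |u_1|_x \ne 0$,
\[
|h(v)|_x \;=\; |u_1|_x \cdot m \;+\; \alpha \cdot |v|_0 \;+\; (m - 1) c ,
\]
an injective affine function of $|v|_0$, and since $\pref{k}(h(v)) = y$, $\suff{k}(h(v)) = y'$ are fixed, the $(k+1)$-Abelian class of $h(v)$ is determined by $|v|_0$ alone. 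The same reasoning for a general factor $p \cdot h(v) \cdot q$ of $w$ of length $n$ (with $|p|, |q| < L$) shows that its $(k+1)$-Abelian class is determined by the triple $(p, q, |v|_0)$, yielding at most $L^2 \cdot \fc{1}{s}(\lceil n/L \rceil) = O(n)$ classes; at least $\fc{1}{s}(\lceil n/L \rceil) = \Omega(n)$ of them are already realized by aligned factors alone. Hence $\fc{k+1}{w}(n) = \Theta(n)$.

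The main obstacle is the gadget verification: that $z_0 \kae{k} z_1$ implies $u_0 \kae{k} u_1$, and that $z_0 \not\kae{k+1} z_1$ survives the padding in the sense that $u_0 \not\kae{k+1} u_1$. The first implication reduces to the characterization of $k$-Abelian equivalence in the preliminaries (matching length-$k$ counts together with matching length-$(k-1)$ prefixes and suffixes), because $z_0 \kae{k} z_1$ already forces $\pref{k-1}(z_0) = \pref{k-1}(z_1)$ and $\suff{k-1}(z_0) = \suff{k-1}(z_1)$, so the boundary counts at the $y\text{-}z_i$ and $z_i\text{-}y'$ junctions for length-$\leq k$ words agree for $i = 0$ and $i = 1$, and summing gives $u_0 \kae{k} u_1$. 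The second implication could fail by accidental cancellation of the interior difference $|z_0|_x - |z_1|_x$ against the difference in boundary contributions, but is easily enforced by choosing $z_0, z_1$ long enough so that the interior term dominates, or by choosing the distinguishing word $x$ to avoid the boundary region.
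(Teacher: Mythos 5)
Your construction is essentially the paper's: the paper likewise takes a uniform morphism $h$ whose two block images are $k$-Abelian equivalent but not $(k+1)$-Abelian equivalent (explicitly $0 \mapsto 0^{k+1}10^{k-1}1$, $1 \mapsto 0^{k}10^{k}1$), applies it to a binary word of linear Abelian complexity, and uses the transfer $\fc{k+1}{h(w)}((2k+2)n) = \Theta(\fc{1}{w}(n))$. Your generic sandwich gadget and the boundary-cancellation concern it raises are sidestepped in the paper by simply exhibiting this explicit pair, but the underlying argument is the same.
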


\begin{proof}
Let $w \in \{0, 1\}^\omega$ and let $h$ be the morphism defined by
\begin{equation*}
   0 \mapsto 0^{k + 1} 1 0^{k - 1} 1,
   \qquad
   1 \mapsto 0^{k} 1 0^{k} 1.
\end{equation*}
Then $h(w)$ is $k$-Abelian periodic and
\begin{math}
   \fc{k + 1}{h(w)}((2k + 2) n) = \Theta(\fc{1}{w}(n)).
\end{math}
The claim follows because there are words $w$ with linear Abelian complexity.
\qed\end{proof}

\section{Minimal $k$-Abelian Complexities} \label{sec:minimal}

In this section classes of words with small $k$-Abelian complexity are studied.
Some well-known results about factor complexity are compared to
results on $k$-Abelian complexity proved in \cite{KaSaZa_jcta}.
It should be expected that ultimately periodic words have low complexity,
and this is indeed true for $k$-Abelian complexity,
although the $k$-Abelian complexity of some ultimately periodic words
is higher that the $k$-Abelian complexity of some aperiodic words.
For many complexity measures,
Sturmian words have the lowest complexity among aperiodic words.
This is also true for $k$-Abelian complexity.

We recall the famous theorem of Morse and Hedlund \cite{MoHe38}
characterizing ultimately periodic words in terms of factor complexity.
This theorem can be generalized for $k$-Abelian complexity:
If
\begin{math}
   \fc{k}{w}(n) < \fcsturm{k}(n)
\end{math}
for some $n$, then $w$ is ultimately periodic,
and if $w$ is ultimately periodic, then
\begin{math}
   \fc{\infty}{w}(n)
\end{math}
is bounded.
This was proved in \cite{KaSaZa_jcta}.

If $k$ is finite, then this generalization
does not give a characterization of ultimately periodic words,
because the function $\fcsturm{k}$ is bounded.
In fact, it is impossible to characterize ultimately periodic words
in terms of $k$-Abelian complexity.
For example, the word $0^{2k - 1} 1^\omega$
has the same $k$-Abelian complexity as every Sturmian word.
On the other hand, for every ultimately periodic word $w$
there is a finite $k$ such that
\begin{math}
   \fc{k}{w}(n) < \fcsturm{k}(n)
\end{math}
for all sufficiently large $n$.

The theorem of Morse and Hedlund has a couple of immediate consequences.
The words $w$ with $\fc{\infty}{w}(n) = n + 1$ for all $n$ are,
by definition, Sturmian words.
Thus the following classification is obtained:
\begin{itemize}
\item $w$ is ultimately periodic
$\Leftrightarrow$
$\fc{\infty}{w}$ is bounded.

\item $w$ is Sturmian
$\Leftrightarrow$
$\fc{\infty}{w}(n) = n + 1$ for all $n$.

\item $w$ is aperiodic and not Sturmian
$\Leftrightarrow$
$\fc{\infty}{w}(n) \geq n + 1$ for all $n$ and
$\fc{\infty}{w}(n) > n + 1$ for some $n$.
\end{itemize}
This can be generalized for $k$-Abelian complexity
if the equivalences are replaced with implications:
\begin{itemize}
\item $w$ is ultimately periodic
$\Rightarrow$
$\fc{k}{w}$ is bounded.

\item $w$ is Sturmian
$\Rightarrow$
$\fc{k}{w} = \fcsturm{k}$.

\item $w$ is aperiodic and not Sturmian
$\Rightarrow$
$\fc{k}{w}(n) \geq \fcsturm{k}(n)$ for all $n$ and
$\fc{k}{w}(n) > \fcsturm{k}(n)$ for some $n$.
\end{itemize}
For $k = 1$ this follows from the theorem of Coven and Hedlund \cite{CoHe73}.
For $k \geq 2$ it follows from a theorem in \cite{KaSaZa_jcta}.

The above result means that
one similarity between factor complexity and $k$-Abelian complexity is that
Sturmian words have the lowest complexity among aperiodic words.
Another similarity is that ultimately periodic words have bounded complexity,
but the largest values can be arbitrarily high:
For every $n$, there is a finite word $u$
having every possible factor of length $n$.
Then $\fc{k}{u^\omega}(n)$ is as high as it can be for any word,
i.e. the number of $k$-Abelian equivalence classes of words of length $n$.

Another direct consequence of the theorem of Morse and Hedlund is that
there is a gap between constant complexity and
the complexity of Sturmian words.
For $k$-Abelian complexity there cannot be a gap
between bounded complexities and $\fcsturm{k}$,
because the function $\fcsturm{k}$ itself is bounded.
However, the question whether there is a gap above bounded complexity
is more difficult.
The answer is that there is no such gap,
even if only uniformly recurrent words are considered.
This is proved in Sect. \ref{sec:arbslow}.

\section{$k$-Abelian Complexity of the Thue-Morse Word} \label{sec:tm}

In this section the $k$-Abelian complexity of the Thue-Morse word is analyzed.
Before that, the Abelian complexity of a closely related word is determined.

Let $\sigma$ be the morphism defined by
\begin{math}
   \sigma(0) = 01, \sigma(1) = 00.
\end{math}
Let
\begin{equation*}
   S = 0100010101000100 \dots
\end{equation*}
be the \emph{period-doubling word},
which is the fixed point of $\sigma$, see \cite{Da00}.

The Abelian complexity of $S$ is completely determined
by the recurrence relations in the following lemma
and by the first two values
\begin{math}
   \fc{1}{S}(1) = \fc{1}{S}(2) = 2.
\end{math}

\begin{lemma} \label{lem:srec}
For $n \geq 1$,
\begin{align*}
   \fc{1}{S}(4n - 1) &= \fc{1}{S}(n) + 1, &\quad
   \fc{1}{S}(4n    ) &= \fc{1}{S}(n)    , \\
   \fc{1}{S}(4n + 1) &= \fc{1}{S}(n) + 1, &\quad
   \fc{1}{S}(4n + 2) &= \fc{1}{S}(n) + 1.
\end{align*}
\end{lemma}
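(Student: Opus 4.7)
The proof strategy is to exploit the self-similar structure of $S$ under $\sigma^2$. Since $\sigma^2(0) = 0100$ and $\sigma^2(1) = 0101$, one has for every $i \geq 0$ that $S[4i] = 0$, $S[4i+1] = 1$, $S[4i+2] = 0$, and $S[4i+3] = S[i]$, which immediately yields the Abelianization identity $|\sigma^2(u)|_1 = |u| + |u|_1$ for every word $u$ (each letter contributes one forced $1$, plus an extra $1$ if and only if the letter is itself $1$). My first move is to introduce $M(\ell) = \max\set{|v|_1}{v \in \factors{S}(\ell)}$ and $m(\ell) = \min\set{|v|_1}{v \in \factors{S}(\ell)}$ and, using formula~\eqref{eq:abelcomplbinary}, rewrite $\fc{1}{S}(\ell) = M(\ell) - m(\ell) + 1$; this reduces the four claimed recurrences to statements about $M$ and $m$. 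I would also record up front the $1$-Lipschitz monotonicity $m(\ell) \leq m(\ell+1) \leq m(\ell) + 1$ (and the analogue for $M$), valid because every length-$(\ell+1)$ factor of $S$ contains a length-$\ell$ factor as a prefix or suffix.

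The heart of the proof is a case analysis on the starting position of a factor $v = S[a..a+\ell-1]$ modulo~$4$. Writing $a = 4i + s$ with $s \in \{0,1,2,3\}$, the block structure lets one decompose $v = p \cdot \sigma^2(u) \cdot q$, where $p,q$ are short fringes (of lengths depending only on $s$ and $\ell$) drawn from the $\sigma^2$-blocks surrounding the central portion and $u$ is a factor of $S$ whose length lies in $\{n-1, n, n+1\}$ and depends only on $s$ and $\ell$. Applying the Abelianization identity and collapsing the boundary letters yields the clean formula $|v|_1 = c_{s,\ell} + |u|_1$ for a constant $c_{s,\ell}$ depending only on $s$ and $\ell$. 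As $i$ varies over $\mathbb{N}$, the factor $u$ ranges over \emph{all} factors of $S$ of its length, so $\set{|v|_1}{v \in \factors{S}(\ell)}$ is the union over $s \in \{0,1,2,3\}$ of four intervals of the form $c_{s,\ell} + [m(|u_s|), M(|u_s|)]$.

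For $\ell = 4n$ a direct computation shows every residue $s$ gives $c_{s,4n} = n$ and $|u_s| = n$, so the four intervals coincide with $n + [m(n), M(n)]$ and $\fc{1}{S}(4n) = \fc{1}{S}(n)$ follows immediately. For each of the other three lengths $\ell \in \{4n-1, 4n+1, 4n+2\}$ the constants $c_{s,\ell}$ and the sizes $|u_s|$ each take two distinct values differing by~$1$; using the $1$-Lipschitz monotonicity of $M$ and $m$ one argues that the four sub-intervals overlap pairwise and their union is a single interval of width $M(n) - m(n) + 2$, yielding the claimed $\fc{1}{S}(\ell) = \fc{1}{S}(n) + 1$.

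The main obstacle will be this last merging step in the $\ell \neq 4n$ cases: one must show that the union of the four sub-intervals really has width exactly one more than $\fc{1}{S}(n)$, and in particular that a jump $m(n+1) = m(n)+1$ or $M(n+1) = M(n)+1$ does not eat up the expected gain. This is where the recursive identity $S[4j+3] = S[j]$ is essential, since it ties the fringe letters $S[i]$ and $S[i+|u_s|]$ at the seams of the decomposition back to the sequence $S$ itself; the resulting flexibility lets one trade between choices of $|u_s| \in \{n, n+1\}$ so as always to realize the extra unit of width, clinching the recurrence.
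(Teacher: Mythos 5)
Your strategy is essentially the paper's: decompose the factors of length $4n+r$ according to their starting position modulo $4$ inside the $\sigma^2$-blocks, use $|\sigma^2(u)|_1=|u|+|u|_1$ to write each count as a constant plus $|u|_1$ for a factor $u$ of length in $\{n-1,n,n+1\}$, and read off the extremes via formula \eqref{eq:abelcomplbinary}. The paper carries this out by listing the four factor sets explicitly for each residue and tabulating recurrences for $p_\ell=\min$ and $q_\ell=\max$; up to and including your computation of the four intervals, your proposal is correct and matches the paper, and the cases $4n-1$, $4n$, $4n+1$ do go through exactly as you describe, because there the inequalities $p_{n}\leq p_{n-1}+1$, $p_{n+1}\geq p_n$ and $q_{n+1}\leq q_n+1$ all point the right way.

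However, the step you yourself flag as ``the main obstacle'' is a genuine gap, and your proposed fix (``trade between choices of $|u_s|$ so as always to realize the extra unit of width'') cannot close it. For $\ell=4n+2$ the four intervals are $n+1+[m(n),M(n)]$ (twice, from starts $\equiv 0,1 \bmod 4$) and $n+[m(n+1),M(n+1)]$ (twice, from starts $\equiv 2,3$). The maximum of the union is $n+1+M(n)$, but the minimum is $n+\min\left(m(n)+1,\ m(n+1)\right)$, which equals $n+m(n)$ only when $m(n+1)=m(n)$; when $m(n+1)=m(n)+1$ the expected extra unit of width is exactly eaten up. This really happens: $m(3)=0$ (the factor $000$) while $m(4)=1$, since every window of four consecutive positions of $S$ contains a position $\equiv 1 \pmod 4$ and $S$ carries a $1$ at every such position. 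Consequently every factor of $S$ of length $14$ contains at least $4$ ones, the maximum is $6$ (attained by $S[20..33]=01010100010101$), and $\fc{1}{S}(14)=3$, whereas the recurrence would give $\fc{1}{S}(4\cdot 3+2)=\fc{1}{S}(3)+1=4$. So no amount of extra argument rescues the $4n+2$ case as stated. You should know that the paper's own proof suffers from the same defect at exactly this point: it asserts $p_{4n+2}=p_n+n$ directly from the displayed decomposition, which tacitly presupposes $p_{n+1}=p_n$. To your credit, you isolated the one step that actually fails; but the honest conclusion is that the claimed identity $\fc{1}{S}(4n+2)=\fc{1}{S}(n)+1$ must be corrected to $\fc{1}{S}(4n+2)=q_n-\min(p_n+1,p_{n+1})+2$, and any downstream use of the lemma rechecked.
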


\begin{proof}
Let
\begin{math}
  p_n = \min \set{|u|_1}{u \in \factors{n}(S)}
  \mand
  q_n = \max \set{|u|_1}{u \in \factors{n}(S)}.
\end{math}
For $a \in \{0, 1\}$, $\sigma^2(a) = 010a$.
Because
\begin{align*}
  \factors{4n - 1}(S) =
      &\set{\sigma^2(a_1 \dots a_{n - 1}) 010}{a_1 \dots a_{n - 1} \in \factors{n - 1}(S)} \cup\\
      &\set{10 a_1 \sigma^2(a_2 \dots a_n)}{a_1 \dots a_n \in \factors{n}(S)} \cup\\
      &\set{0 a_1 \sigma^2(a_2 \dots a_n) 0}{a_1 \dots a_n \in \factors{n}(S)} \cup\\
      &\set{a_1 \sigma^2(a_2 \dots a_n) 01}{a_1 \dots a_n \in \factors{n}(S)},
\end{align*}
\begin{align*}
  \factors{4n}(S) =
      &\set{\sigma^2(a_1 \dots a_n)}{a_1 \dots a_n \in \factors{n}(S)} \cup\\
      &\set{10 a_1 \sigma^2(a_2 \dots a_n) 0}{a_1 \dots a_n \in \factors{n}(S)} \cup\\
      &\set{0 a_1 \sigma^2(a_2 \dots a_n) 01}{a_1 \dots a_n \in \factors{n}(S)} \cup\\
      &\set{a_1 \sigma^2(a_2 \dots a_n) 010}{a_1 \dots a_n \in \factors{n}(S)},
\end{align*}
\begin{align*}
  \factors{4n + 1}(S) =
      &\set{\sigma^2(a_1 \dots a_n) 0}{a_1 \dots a_n \in \factors{n}(S)} \cup\\
      &\set{10 a_1 \sigma^2(a_2 \dots a_n) 01}{a_1 \dots a_n \in \factors{n}(S)} \cup\\
      &\set{0 a_1 \sigma^2(a_2 \dots a_n) 010}{a_1 \dots a_n \in \factors{n}(S)} \cup\\
      &\set{a_1 \sigma^2(a_2 \dots a_{n + 1})}{a_1 \dots a_{n + 1} \in \factors{n + 1}(S)},
\end{align*}
\begin{align*}
  \factors{4n + 2}(S) =
      &\set{\sigma^2(a_1 \dots a_n) 01}{a_1 \dots a_n \in \factors{n}(S)} \cup\\
      &\set{10 a_1 \sigma^2(a_2 \dots a_n) 010}{a_1 \dots a_n \in \factors{n}(S)} \cup\\
      &\set{0 a_1 \sigma^2(a_2 \dots a_{n + 1})}{a_1 \dots a_{n + 1} \in \factors{n + 1}(S)} \cup\\
      &\set{a_1 \sigma^2(a_2 \dots a_{n + 1}) 0}{a_1 \dots a_{n + 1} \in \factors{n + 1}(S)},
\end{align*}
it can be seen that
\begin{align*}
  p_{4n - 1} &= p_n + n - 1, & q_{4n - 1} &= q_n + n    ,\\
  p_{4n    } &= p_n + n    , & q_{4n    } &= q_n + n    ,\\
  p_{4n + 1} &= p_n + n    , & q_{4n + 1} &= q_n + n + 1,\\
  p_{4n + 2} &= p_n + n    , & q_{4n + 2} &= q_n + n + 1.
\end{align*}
The claim follows because
\begin{math}
  \fc{1}{S}(n) = q_n - p_n + 1
\end{math}
for all $n$.
\qed\end{proof}

\begin{lemma} \label{lem:fcs}
For $n \geq 1$ and $m \geq 0$,
\begin{align*}
   \fc{1}{S}(n) = O(\log n), \quad
   \fc{1}{S}((2 \cdot 4^m + 1) / 3) = m + 2, \quad
   \fc{1}{S}(2^m) = 2.
\end{align*}
\end{lemma}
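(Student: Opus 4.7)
The plan is to derive all three claims directly from the recurrences of Lemma \ref{lem:srec} together with the base values $\fc{1}{S}(1) = \fc{1}{S}(2) = 2$. The key observation is that each of the four recurrence cases shrinks the argument by a factor of roughly $4$ while increasing $\fc{1}{S}$ by either $0$ or $1$, so every claim is just a matter of iterating along the right branch.

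For the logarithmic upper bound, I would show by strong induction on $n$ that $\fc{1}{S}(n) \leq \log_4(n) + C$ for a small constant $C$. Given $n \geq 3$, write $n$ as $4m - 1$, $4m$, $4m + 1$, or $4m + 2$; in each case $m \leq (n + 1)/4$, so by Lemma \ref{lem:srec} we have $\fc{1}{S}(n) \leq \fc{1}{S}(m) + 1$, and the induction closes. This gives $\fc{1}{S}(n) = O(\log n)$.

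For $\fc{1}{S}(2^m) = 2$, I would note that for $m \geq 2$ the integer $2^m$ is of the form $4n$ with $n = 2^{m-2}$, so the second recurrence gives $\fc{1}{S}(2^m) = \fc{1}{S}(2^{m-2})$. A straightforward induction on $m$, with base cases $\fc{1}{S}(1) = \fc{1}{S}(2) = 2$, yields the equality for all $m \geq 0$.

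For $\fc{1}{S}((2 \cdot 4^m + 1)/3) = m + 2$, let $a_m = (2 \cdot 4^m + 1)/3$. First one checks that $a_m$ is a positive integer (since $4^m \equiv 1 \pmod 3$) and that $a_0 = 1$. The algebraic identity $4 a_m - 1 = (8 \cdot 4^m + 1)/3 = a_{m+1}$ puts $a_{m+1}$ squarely in the $4n - 1$ case of Lemma \ref{lem:srec}, giving $\fc{1}{S}(a_{m+1}) = \fc{1}{S}(a_m) + 1$. Together with $\fc{1}{S}(a_0) = \fc{1}{S}(1) = 2$, an induction on $m$ finishes the proof. The only potential obstacle is the tiny arithmetic check that $(2 \cdot 4^{m+1} + 1)/3 = 4(2 \cdot 4^m + 1)/3 - 1$, and that $a_m \geq 1$ so that the recurrence of Lemma \ref{lem:srec} is applicable; both are routine.
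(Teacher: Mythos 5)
Your proposal is correct and takes essentially the same route as the paper, whose proof of this lemma is literally just ``follows from Lemma~\ref{lem:srec} by induction''; your write-up supplies exactly the inductions the paper leaves implicit, and the arithmetic for the branches $2^m = 4 \cdot 2^{m-2}$ and $a_{m+1} = 4a_m - 1$ checks out. One small technical wrinkle: the invariant $\fc{1}{S}(n) \leq \log_4(n) + C$ does not quite close in the case $n = 4m - 1$, since the induction only yields $\log_4(m) + C + 1 = \log_4(4m) + C > \log_4(4m-1) + C$; replacing the invariant by $\fc{1}{S}(n) \leq \log_4(n - 1/3) + C$ (so that $n - 1/3 = 4(m - 1/3)$ exactly when $n = 4m-1$, and is larger in the other three cases) makes the induction close and still gives $O(\log n)$.
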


\begin{proof}
Follows from Lemma \ref{lem:srec} by induction.
\qed\end{proof}

The Abelian complexity of $S$
has a logarithmic upper bound and a constant lower bound.
These bounds are the best possible increasing bounds.

\begin{theorem} \label{thm:fcs}
\begin{math}
   \fcu{1}{S}(m) = \Theta(\log n)
   \mand
   \fcl{1}{S}(m) = 2.
\end{math}
\end{theorem}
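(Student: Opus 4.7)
The plan is to split the theorem into the four bounds $\fcu{1}{S}(n) = O(\log n)$, $\fcu{1}{S}(n) = \Omega(\log n)$, $\fcl{1}{S}(n) \leq 2$, and $\fcl{1}{S}(n) \geq 2$, each of which follows almost directly from Lemma \ref{lem:fcs} together with elementary facts about $S$.

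For the upper-complexity bounds, first I would invoke the estimate $\fc{1}{S}(n) = O(\log n)$ from Lemma \ref{lem:fcs}: since $\fcu{1}{S}(n) = \max_{m \leq n} \fc{1}{S}(m)$ and $\log m$ is increasing, this immediately gives $\fcu{1}{S}(n) = O(\log n)$. For the matching lower bound, I would use the exact value $\fc{1}{S}((2 \cdot 4^m + 1)/3) = m + 2$ from the same lemma. Given $n$, let $m = \lfloor \log_4(3n/2) \rfloor$, so that $(2 \cdot 4^m + 1)/3 \leq n$; then $\fcu{1}{S}(n) \geq m + 2 = \Omega(\log n)$.

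For the lower-complexity statement $\fcl{1}{S}(n) = 2$, the upper inequality is immediate from the third identity of Lemma \ref{lem:fcs}: given any $n$, choose $m$ with $2^m \geq n$, which forces $\fcl{1}{S}(n) \leq \fc{1}{S}(2^m) = 2$. For the reverse inequality, I would observe that $S$ is aperiodic (it is a non-ultimately-periodic fixed point of $\sigma$; one can note that $\sigma$ has exponentially distinct orbits, or simply that if $S$ were ultimately periodic then the set of factors would be finite, whereas the recurrences in Lemma \ref{lem:srec} show that $\fc{1}{S}$ is unbounded). Then by the Coven--Hedlund theorem (Theorem \ref{MH} item 2), $\fc{1}{S}(n) \geq 2$ for all $n \geq 1$, hence $\fcl{1}{S}(n) \geq 2$.

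None of the steps is a serious obstacle; essentially everything is packaged into Lemma \ref{lem:fcs}. The only mildly delicate point is the one-line verification that $S$ is aperiodic, so that Coven--Hedlund applies to give the trivial lower bound $\fc{1}{S}(n) \geq 2$. This can also be bypassed by noting directly from formula \eqref{eq:abelcomplbinary} that $\fc{1}{S}(n) \geq 2$ whenever $S$ contains factors of length $n$ with different numbers of $1$'s, which is clear for every $n \geq 1$ since $S$ contains both $0$ and $1$ and no factor $0^k$ with $k \geq 3$ (easy from the definition of $\sigma$).
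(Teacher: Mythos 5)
Your proof is correct and takes essentially the same route as the paper: the paper's own proof of Theorem \ref{thm:fcs} is literally ``Follows from Lemma \ref{lem:fcs}'', and you have just filled in the routine deductions (monotonicity of $\log$, picking $m$ with $(2 \cdot 4^m + 1)/3 \leq n$ resp.\ $2^m \geq n$, and the trivial lower bound $\fc{1}{S}(n) \geq 2$). One small slip in your optional aside: $S$ does contain $0^3$ (already inside $\sigma^3(0) = 01000101$), so the correct claim is that $S$ contains no $0^4$; in any case the cleanest source of $\fc{1}{S}(n) \geq 2$ for all $n$ is induction on the recurrences of Lemma \ref{lem:srec} from $\fc{1}{S}(1) = \fc{1}{S}(2) = 2$, which avoids both the aperiodicity detour and the fact that Theorem \ref{MH} is stated for bi-infinite words.
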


\begin{proof}
Follows from Lemma \ref{lem:fcs}.
\qed\end{proof}

Now, let $\tau$ be the Thue-Morse morphism defined by
\begin{math}
   \tau(0) = 01, \tau(1) = 10.
\end{math}
Let
\begin{equation*}
   T = 0110100110010110 \dots
\end{equation*}
be the Thue-Morse word,
which is a fixed point of $\tau$.
The first few values of $\fc{2}{T}$ are
\begin{equation*}
   2,4,6,8,6,8,10,8,6,8,8,10,10,10,8,8,6,8,10,10.
\end{equation*}

The 2-Abelian equivalence of factors of $T$
can be determined with the help of the following lemma.

\begin{lemma} \label{lem:2abeleq}
Words $u, v \in \{0, 1\}^*$ are 2-Abelian equivalent if and only if
\begin{equation*}
   |u| = |v|, \qquad
   |u|_{00} = |v|_{00}, \qquad
   |u|_{11} = |v|_{11} \eand
   \pref{1}(u) = \pref{1}(v).
\end{equation*}
\end{lemma}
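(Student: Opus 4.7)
\emph{Plan.} The forward direction is immediate from the definition of $\sim_2$, so I would focus on the converse. By the alternative characterization recalled in the preliminaries, for $|u|, |v| \geq 1$ one has $u \sim_2 v$ precisely when all four length-$2$ factor counts agree and the length-$1$ prefixes and length-$1$ suffixes match. The cases $|u| = 0$ (both empty) and $|u| = 1$ (in which case $\pref{1}(u) = \pref{1}(v)$ forces $u = v$) are immediate, so I would assume $|u| = |v| \geq 2$ and deduce $|u|_{01} = |v|_{01}$, $|u|_{10} = |v|_{10}$, and $\suff{1}(u) = \suff{1}(v)$ from the hypothesis.

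The next step is a pair of counting identities for $|u|_0$. Each occurrence of $0$ in $u$ except possibly the one at the final position is followed by a letter, contributing $1$ to either $|u|_{00}$ or $|u|_{01}$; this gives $|u|_0 = |u|_{00} + |u|_{01} + \sigma$, where $\sigma = 1$ if $\suff{1}(u) = 0$ and $\sigma = 0$ otherwise. Classifying instead by the preceding letter yields $|u|_0 = |u|_{00} + |u|_{10} + \pi$, with $\pi$ the analogous indicator for $\pref{1}(u) = 0$. Subtracting, $|u|_{01} - |u|_{10} = \pi - \sigma \in \{-1, 0, 1\}$, while partitioning the $|u| - 1$ length-$2$ factor positions by type gives $|u|_{01} + |u|_{10} = |u| - 1 - |u|_{00} - |u|_{11}$.

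The crux — and, I expect, the main obstacle to a slicker statement — is that $\suff{1}(u)$ is already determined by $|u|$, $|u|_{00}$, $|u|_{11}$, and $\pref{1}(u)$. Since $|u|_{01} - |u|_{10}$ and $|u|_{01} + |u|_{10}$ share the same parity, the parity of $|u| - 1 - |u|_{00} - |u|_{11}$ combined with $\pi$ pins down $\sigma$, and hence $\suff{1}(u)$. Morally, this is the binary-de-Bruijn-graph-of-order-$1$ shadow of the familiar fact that the endpoints of an Eulerian trail are determined by the edge multiset and one endpoint. Applied to both $u$ and $v$, this yields $\suff{1}(u) = \suff{1}(v)$; the sum-and-difference relations then force $|u|_{01} = |v|_{01}$ and $|u|_{10} = |v|_{10}$, and invoking the preliminaries' characterization of $\sim_2$ concludes the proof.
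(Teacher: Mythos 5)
Your proof is correct and follows essentially the same route as the paper's: both hinge on the observation that $|w|_{01}-|w|_{10}\in\{-1,0,1\}$ is determined by the parity of $|w|_{01}+|w|_{10}$ (which the hypotheses fix via $|u|-1-|u|_{00}-|u|_{11}$) together with the first letter. You are somewhat more explicit than the paper in deriving this from the two counting identities for $|u|_0$ and in checking that the length-$1$ suffix is also determined, a point the paper's proof leaves implicit.
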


\begin{proof}
The ``only if'' direction follows immediately
from the definition of 2-Abelian equivalence.
For the other direction,
it follows from the assumptions that
\begin{math}
   |u|_{01} + |u|_{10} = |v|_{01} + |v|_{10}.
\end{math}
In any word $w \in \{0, 1\}^*$,
$|w|_{01}$ and $|w|_{10}$ can differ by at most one.
If
\begin{math}
   |w|_{01} + |w|_{10}
\end{math}
is even, then
\begin{math}
   |w|_{01} = |w|_{10}.
\end{math}
If it is odd and $\pref{1}(w) = 0$, then
\begin{math}
   |w|_{01} = |w|_{10} + 1.
\end{math}
If it is odd and $\pref{1}(w) = 1$, then
\begin{math}
   |w|_{01} + 1 = |w|_{10}.
\end{math}
This means that
\begin{math}
   |u|_{01} = |v|_{01} \mand |u|_{10}= |v|_{10}
\end{math}
and $u$ and $v$ are 2-Abelian equivalent.
\qed\end{proof}

The following lemma states that
if $u$ is a factor of $T$,
then the numbers $|u|_{00}$ and $|u|_{11}$
can differ by at most one.

\begin{lemma} \label{lem:00a11}
In the image of any word under $\tau$,
between any two occurrences of $00$ there is an occurrence of $11$
and vice versa.
\end{lemma}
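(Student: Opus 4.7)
The plan is to reduce the statement to a trivial combinatorial fact about the pre-image word. Recall that $\tau(u)$ is a concatenation of the blocks $\tau(0) = 01$ and $\tau(1) = 10$, aligned on the even positions. Since each block has length $2$, neither $00$ nor $11$ can occur inside a single block, so every occurrence of $00$ or $11$ in $\tau(u)$ must straddle the boundary between two consecutive blocks.

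First I would pin down exactly which letter pairs in $u$ produce each pattern at a block boundary. An occurrence of $00$ at a boundary requires the preceding block to end in $0$ and the following block to begin with $0$; this happens precisely when the corresponding letter of $u$ is $1$ (so $\tau(1) = 10$ ends in $0$) and the next letter of $u$ is $0$ (so $\tau(0) = 01$ begins with $0$). Thus occurrences of $00$ in $\tau(u)$ are in bijection, preserving order, with occurrences of the factor $10$ in $u$. The symmetric argument shows that occurrences of $11$ in $\tau(u)$ correspond in exactly the same way to occurrences of $01$ in $u$.

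The lemma now reduces to the observation that in any binary word $u$, between any two occurrences of $10$ there is an occurrence of $01$, and vice versa. Indeed, if $10$ occurs at positions $i < j$ of $u$, then $u_{i+1} = 0$ while $u_j = 1$, so somewhere strictly between we pass from a $0$ to a $1$, yielding the desired $01$; the reverse direction is identical.

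I do not expect any real obstacle. The only point worth checking is that the corresponding $11$ in $\tau(u)$ actually lies \emph{strictly} between the two given occurrences of $00$ (rather than overlapping one of them). This is automatic, because the intermediate $01$ in $u$ lives at a position strictly between the two occurrences of $10$, and the bijection between boundary patterns in $\tau(u)$ and consecutive letter pairs in $u$ preserves order; hence the corresponding $11$ is located strictly between the two $00$'s in $\tau(u)$.
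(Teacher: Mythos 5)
Your proposal is correct and follows essentially the same route as the paper's proof: the paper likewise observes that $00$ can only occur in the middle of $\tau(10)$ and $11$ only in the middle of $\tau(01)$, and then invokes the fact that occurrences of $10$ and $01$ alternate in any binary word. You simply spell out the boundary-straddling argument and the order-preservation in more detail.
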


\begin{proof}
00 can only occur in the middle of $\tau(10)$ and
11 can only occur in the middle of $\tau(01)$.
The claim follows because 10's and 01's alternate in all binary words.
\qed\end{proof}

Let $u$ be a factor of $T$.
If $|u|$ and $|u|_{00} +  |u|_{11}$ are given,
then there are at most 4 possibilities
for the 2-Abelian equivalence class of $u$.
This is stated in a different way in the next lemma.
First we define a function $\phi$ as follows.
If $w = a_1 \dots a_n$,
then $\phi(w) = b_1 \dots b_{n - 1}$, where
$b_i = 0$ if $a_i a_{i + 1} \in \{01, 10\}$ and
$b_i = 1$ if $a_i a_{i + 1} \in \{00, 11\}$.
If $w = a_1 a_2 \dots$ is an infinite word,
then $\phi(w) = b_1 b_2 \dots$ is defined in an analogous way.

\begin{lemma} \label{lem:phi}
Let $u_1, \dots, u_n$ be factors of $T$.
If $\phi(u_1), \dots, \phi(u_n)$ are Abelian equivalent,
then $u_1, \dots, u_n$
are in at most 4 different 2-Abelian equivalence classes.
\end{lemma}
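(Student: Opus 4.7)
The plan is to reduce the lemma to a counting argument using Lemma~\ref{lem:2abeleq} (characterization of 2-Abelian equivalence by $|u|$, $|u|_{00}$, $|u|_{11}$, $\pref{1}(u)$) together with Lemma~\ref{lem:00a11} (the counts $|u|_{00}$ and $|u|_{11}$ differ by at most one in any factor of $T$).

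First I would translate the hypothesis. By definition of $\phi$, for any word $w$ we have $|\phi(w)| = |w| - 1$, $|\phi(w)|_1 = |w|_{00} + |w|_{11}$, and $|\phi(w)|_0 = |w|_{01} + |w|_{10}$. So Abelian equivalence of $\phi(u_1),\dots,\phi(u_n)$ gives a common length $|u_i|$ and a common value $s = |u_i|_{00} + |u_i|_{11}$ across all $i$.

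Next I would enumerate the possibilities for the pair $(|u_i|_{00}, |u_i|_{11})$. Since each $u_i$ is a factor of $T$, Lemma~\ref{lem:00a11} forces $\bigl||u_i|_{00} - |u_i|_{11}\bigr| \leq 1$. Combined with the fixed sum $s$, this leaves at most two pairs: either $(s/2, s/2)$ when $s$ is even, or $\{((s-1)/2,(s+1)/2),((s+1)/2,(s-1)/2)\}$ when $s$ is odd. Independently, $\pref{1}(u_i) \in \{0,1\}$ contributes at most two further possibilities.

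Finally, I would invoke Lemma~\ref{lem:2abeleq}: since all $u_i$ share the same length, the 2-Abelian class of each $u_i$ is determined by the triple $(|u_i|_{00}, |u_i|_{11}, \pref{1}(u_i))$, for which the preceding step gives at most $2 \cdot 2 = 4$ choices. Hence $u_1,\dots,u_n$ lie in at most four 2-Abelian equivalence classes. The only subtle point is the use of Lemma~\ref{lem:00a11}, which is where the assumption that the $u_i$ are factors of $T$ (and not arbitrary binary words) genuinely enters; everything else is bookkeeping via the definitions of $\phi$ and of 2-Abelian equivalence.
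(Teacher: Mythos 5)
Your proof is correct and follows essentially the same route as the paper's: reduce to the triple $(|u_i|_{00}, |u_i|_{11}, \pref{1}(u_i))$ via Lemma~\ref{lem:2abeleq}, then bound the possibilities by $2 \cdot 2 = 4$ using Lemma~\ref{lem:00a11} and the fixed sum $|u_i|_{00} + |u_i|_{11} = |\phi(u_i)|_1$. Your explicit observation that the common length of the $u_i$ follows from $|\phi(w)| = |w| - 1$ is a detail the paper leaves implicit, but the argument is the same.
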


\begin{proof}
The numbers
\begin{math}
   |u_i|_{00} + |u_i|_{11} = |\phi(u_i)|_1
\end{math}
are equal for all $i$;
let this number be $m$.
By Lemma \ref{lem:00a11},
\begin{math}
  \{|u_i|_{00}, |u_i|_{11}\} = \{\lfloor m / 2 \rfloor, \lceil m / 2 \rceil\}.
\end{math}
There are at most four different possible values for the triples
\begin{math}
   (|u_i|_{00}, |u_i|_{11}, \pref{1}(u_i)).
\end{math}
The claim follows from Lemma \ref{lem:2abeleq}.
\qed\end{proof}

Now it can be proved that the 2-Abelian complexity of $T$
is of the same order as the Abelian complexity of $\phi(T)$.
It is known that $\phi(T)$ is actually the period-doubling word $S$.

\begin{lemma} \label{lem:fctphit}
For $n \geq 2$,
\begin{math}
   \fc{1}{S}(n - 1) \leq \fc{2}{T}(n) \leq 4 \fc{1}{S}(n - 1).
\end{math}
\end{lemma}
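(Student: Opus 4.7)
The plan is to define a natural map from $2$-Abelian equivalence classes of length-$n$ factors of $T$ to Abelian equivalence classes of length-$(n-1)$ factors of $S=\phi(T)$, and then to show that it is well-defined, surjective, and at most $4$-to-$1$. Both inequalities will follow immediately.

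First I would verify well-definedness. Suppose $u, v \in \factors{T}(n)$ satisfy $u \kae{2} v$. By Lemma \ref{lem:2abeleq} we have $|u|_{00} = |v|_{00}$ and $|u|_{11} = |v|_{11}$. From the definition of $\phi$, in any binary word $w$ the number of $1$'s in $\phi(w)$ equals $|w|_{00} + |w|_{11}$, so $|\phi(u)|_1 = |\phi(v)|_1$ and $|\phi(u)| = n-1 = |\phi(v)|$. Since Abelian equivalence of binary words is determined by length and the count of $1$'s, this yields $\phi(u) \kae{1} \phi(v)$. Thus the rule $[u]_{\kae{2}} \mapsto [\phi(u)]_{\kae{1}}$ defines a map $\Phi$ from $\factors{T}(n)/\kae{2}$ to $\factors{S}(n-1)/\kae{1}$.

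Next I would handle surjectivity. Since $\phi(T) = S$ (as noted in the paragraph preceding the lemma), every factor of $S$ of length $n-1$ is of the form $\phi(u)$ for some factor $u$ of $T$ of length $n$; hence $\Phi$ is surjective, giving $\fc{2}{T}(n) \geq \fc{1}{S}(n-1)$. For the upper bound, consider any fibre $\Phi^{-1}([s])$ with $s \in \factors{S}(n-1)$; the preimages in $T$ of length $n$ all have $\phi$-images Abelian equivalent to $s$, so by Lemma \ref{lem:phi} they occupy at most $4$ different $2$-Abelian equivalence classes. Summing over fibres gives $\fc{2}{T}(n) \leq 4\fc{1}{S}(n-1)$.

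The argument is essentially a bookkeeping reduction to the previously established Lemmas \ref{lem:2abeleq}, \ref{lem:00a11} and \ref{lem:phi}; the only non-trivial ingredient already absorbed into those lemmas is the alternation of occurrences of $00$ and $11$ in factors of $T$, which is precisely what forces the fibres of $\Phi$ to be small. The main potential pitfall is the boundary check that $|\phi(u)| = |u| - 1$, which explains the index shift $n \mapsto n-1$ in the statement; this is immediate from the definition of $\phi$ once one is careful about the length requirement $n \geq 2$.
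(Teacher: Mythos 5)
Your proposal is correct and follows essentially the same route as the paper's proof: the paper also observes that $\phi$ sends the length-$n$ factors of $T$ onto the length-$(n-1)$ factors of $S=\phi(T)$, that $2$-Abelian equivalence of factors implies Abelian equivalence of their $\phi$-images (giving the lower bound), and that Lemma \ref{lem:phi} bounds each fibre by $4$ (giving the upper bound). You have merely made the induced map on quotient sets and its surjectivity explicit, which the paper leaves implicit.
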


\begin{proof}
If the factors of $T$ of length $n$ are
$u_1, \dots, u_m$,
then the factors of $\phi(T)$ of length $n - 1$ are
$\phi(u_1), \dots, \phi(u_m)$.
If $u_i$ and $u_j$ are 2-Abelian equivalent,
then $\phi(u_i)$ and $\phi(u_j)$ are Abelian equivalent,
so the first inequality follows.
The second inequality follows from Lemma \ref{lem:phi}
\qed\end{proof}

\begin{lemma} \label{lem:fct}
For $n \geq 1$ and $m \geq 0$,
\begin{align*}
   \fc{2}{T}(n) = O(\log n), \quad
   \fc{2}{T}((2 \cdot 4^m + 4) / 3) = \Theta(m), \quad
   \fc{2}{T}(2^m + 1) \leq 8.
\end{align*}
\end{lemma}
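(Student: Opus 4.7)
The plan is to reduce everything to Lemma \ref{lem:fcs} via the two-sided estimate in Lemma \ref{lem:fctphit}, namely
\begin{equation*}
   \fc{1}{S}(n - 1) \leq \fc{2}{T}(n) \leq 4 \fc{1}{S}(n - 1).
\end{equation*}
All three claims are then pure bookkeeping; no new combinatorial argument on $T$ is required.

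First I would treat the logarithmic upper bound. The upper half of Lemma \ref{lem:fctphit} gives $\fc{2}{T}(n) \leq 4 \fc{1}{S}(n - 1)$, and Lemma \ref{lem:fcs} states that $\fc{1}{S}(n - 1) = O(\log(n - 1)) = O(\log n)$, yielding $\fc{2}{T}(n) = O(\log n)$.

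For the middle statement, I would set $n = (2 \cdot 4^m + 1)/3$, which is a positive integer because $4 \equiv 1 \pmod{3}$ implies $2 \cdot 4^m + 1 \equiv 0 \pmod{3}$; then $n + 1 = (2 \cdot 4^m + 4)/3$. Applying Lemma \ref{lem:fctphit} with $n + 1$ in place of $n$, together with the second clause of Lemma \ref{lem:fcs}, we get
\begin{equation*}
   m + 2 = \fc{1}{S}(n) \leq \fc{2}{T}\!\left(\tfrac{2 \cdot 4^m + 4}{3}\right) \leq 4 \fc{1}{S}(n) = 4(m + 2),
\end{equation*}
which is $\Theta(m)$.

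For the third statement, set $n = 2^m + 1$. The upper half of Lemma \ref{lem:fctphit} gives $\fc{2}{T}(2^m + 1) \leq 4 \fc{1}{S}(2^m)$, and the third clause of Lemma \ref{lem:fcs} says $\fc{1}{S}(2^m) = 2$, so $\fc{2}{T}(2^m + 1) \leq 8$.

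There is no real obstacle: the conceptual work was done in Lemmas \ref{lem:fcs} and \ref{lem:fctphit}. The only care needed is the index shift $n \leftrightarrow n - 1$ between the two lemmas and the elementary divisibility check that $(2 \cdot 4^m + 1)/3$ is an integer, so that the middle claim can be written in the clean form stated.
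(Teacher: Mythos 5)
Your proposal is correct and follows exactly the paper's route: the paper's proof is a one-line appeal to Lemmas \ref{lem:fctphit} and \ref{lem:fcs}, and you have simply made the index shift $n \leftrightarrow n-1$ and the divisibility check explicit. No discrepancies.
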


\begin{proof}
Follows from Lemmas \ref{lem:fctphit} and \ref{thm:fcs}.
\qed\end{proof}

The $k$-Abelian complexity of $T$
behaves in a similar way as
the Abelian complexity of $S$.

\begin{theorem}
Let $k \geq 2$.
Then
\begin{equation*}
   \fcu{k}{T}(m) = \Theta(\log n)
   \eand
   \fcl{k}{T}(m) = \Theta(1).
\end{equation*}
\end{theorem}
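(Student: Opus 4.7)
The statement bundles four bounds: $\fcu{k}{T}(n) = O(\log n)$, $\fcu{k}{T}(n) = \Omega(\log n)$, $\fcl{k}{T}(n) = O(1)$, and $\fcl{k}{T}(n) = \Omega(1)$. The two upper-side bounds on $\fcu{k}{T}$ and $\fcl{k}{T}$ are transferred from the already-established behaviour of $\fc{2}{T}$ using Lemma \ref{lem:fixedpointuniform}, which exploits the fact that $T$ is a fixed point of the $2$-uniform morphism $\tau$. The lower bound $\fcu{k}{T}(n) = \Omega(\log n)$ is obtained from the elementary observation that for $k \geq 2$ the relation $\kae{k}$ refines $\kae{2}$, so $\fc{k}{T}(n) \geq \fc{2}{T}(n)$. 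Finally, $\fcl{k}{T}(n) \geq 1$ is trivial.

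\textbf{Upper bound for $\fcu{k}{T}$.} Fix $i$ with $2^i \geq k - 1$. By Lemma \ref{lem:fixedpointuniform}, $\fc{k}{T}(2^i(n+1)) = O(\fc{2}{T}(n)) = O(\log n)$ using Lemma \ref{lem:fct}. To extend this from the arithmetic progression $\{2^i(n+1)\}$ to arbitrary lengths, I re-run the argument inside the proof of Lemma \ref{lem:fixedpointuniform}: every factor of $T$ of length $\ell$ can be written as $p\,\tau^i(u)\,s$ with $|u| = \Theta(\ell/2^i)$ and $|p|+|s| < 2 \cdot 2^i$, so the $k$-Abelian class is determined by the boundary words $p$ and $s$ (only finitely many options, independent of $\ell$) together with the $2$-Abelian class of $u$. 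This yields $\fc{k}{T}(\ell) = O(\fc{2}{T}(\ell)) = O(\log \ell)$ for all $\ell$, and hence $\fcu{k}{T}(n) = O(\log n)$.

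\textbf{Lower bound for $\fcu{k}{T}$ and upper bound for $\fcl{k}{T}$.} Since $\kae{k}$ refines $\kae{2}$ for $k \geq 2$, the inequality $\fc{k}{T}(n) \geq \fc{2}{T}(n)$ combined with the subsequence $\fc{2}{T}((2 \cdot 4^m + 4)/3) = \Theta(m)$ from Lemma \ref{lem:fct} gives $\fc{k}{T}(n_m) = \Omega(\log n_m)$ along $n_m = (2 \cdot 4^m + 4)/3$; monotonicity of $\fcu{k}{T}$ lifts this to $\fcu{k}{T}(n) = \Omega(\log n)$. For the upper bound on $\fcl{k}{T}$, I feed the other subsequence from Lemma \ref{lem:fct} into Lemma \ref{lem:fixedpointuniform}: since $\fc{2}{T}(2^m + 1) \leq 8$, one obtains $\fc{k}{T}(2^i(2^m + 2)) = O(1)$, giving $\fc{k}{T}$ bounded along lengths tending to infinity, so $\fcl{k}{T}(n) = O(1)$. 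The only non-routine point in the whole argument is the extension of Lemma \ref{lem:fixedpointuniform} to all lengths performed in the previous paragraph; this is a direct variant of the original proof rather than a genuine obstacle, so the combination of Lemmas \ref{lem:fixedpointuniform} and \ref{lem:fct} with the refinement $\kae{k} \Rightarrow \kae{2}$ does essentially all the work.
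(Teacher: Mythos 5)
Your proof is correct and follows essentially the same route as the paper, whose proof of this theorem is simply ``Follows from Lemmas \ref{lem:fct} and \ref{lem:fixedpointuniform}.'' You have merely made explicit two steps the paper leaves implicit: the extension of Lemma \ref{lem:fixedpointuniform} from lengths $2^i(n+1)$ to all lengths (needed for the $O(\log n)$ bound on $\fcu{k}{T}$), and the refinement inequality $\fc{k}{T}(n) \geq \fc{2}{T}(n)$ (needed for the $\Omega(\log n)$ lower bound), both of which you justify correctly.
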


\begin{proof}
Follows from Lemmas \ref{lem:fct} and \ref{lem:fixedpointuniform}.
\qed\end{proof}

\section{Arbitrarily Slowly Growing $k$-Abelian Complexities} \label{sec:arbslow}

In this section it is studied
whether there is a gap above bounded $k$-Abelian complexity.
This question can be formalized in two ways:
\begin{itemize}
\item Does there exists an increasing unbounded function $f: \Ni \to \Ni$
such that for every infinite word $w$
either $\fc{k}{w}$ is bounded or $\fc{k}{w} = \Omega(f)$?

\item Does there exists an increasing unbounded function $f: \Ni \to \Ni$
such that for every infinite word $w$
either $\fc{k}{w}$ is bounded or $\fc{k}{w} \ne O(f)$?
\end{itemize}
The first question has already been answered negatively in Sect. \ref{sec:tm}.
The answer to the second question is also negative,
even if only uniformly recurrent words are considered.
A construction proving this is given below.

Let $n_1, n_2, \dots$ be a sequence of integers greater than 1.
Let $m_j = n_1 \dots n_j$ for $j = 0, 1, 2, \dots$.
Let $a_i = 0$ if the greatest $j$ such that $m_j | i$ is even and
$a_i = 1$ otherwise.
Let $U = a_1 a_2 a_3 \dots$.

\begin{lemma} \label{lem:uunrec}
The word $U$ is uniformly recurrent.
\end{lemma}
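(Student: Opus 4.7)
The plan is to exploit the self-similar block structure of $U$. First I would verify that for every $j \geq 0$, the word $U$ decomposes as
\begin{equation*}
   U = B_j c_1 B_j c_2 B_j c_3 \cdots ,
\end{equation*}
where $B_j = a_1 \cdots a_{m_j - 1}$ is the fixed prefix of $U$ of length $m_j - 1$ and $c_k = a_{k m_j} \in \{0,1\}$. This is a routine divisibility check: for $1 \leq i < m_j$ and any $k \geq 0$, the largest $j'$ with $m_{j'} \mid k m_j + i$ coincides with the largest $j'$ with $m_{j'} \mid i$, since $0 < i < m_j$ forces $m_j \nmid k m_j + i$ and pushes the maximising $j'$ below $j$.

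Second, I would analyse the auxiliary sequence $(c_r)_{r \geq 1}$ and show that both values $0$ and $1$ occur in it, with consecutive occurrences of any fixed value at distance at most $2 n_{j+1}$. The point is that $(c_r)$ is, up to a global complementation depending on the parity of $j$, the very same construction applied to the shifted parameter sequence $(n_{j+1}, n_{j+2}, \dots)$. An occurrence of the sparse bit forces its index to be a multiple of $n_{j+1}$, so no two adjacent positions can both carry it; this bounds gaps between consecutive occurrences of the dense bit by $2$. Applying the same observation one level higher to the subsequence of $(c_r)$ indexed by multiples of $n_{j+1}$ (two consecutive such indices cannot both be multiples of $n_{j+2}$) bounds the gap between consecutive occurrences of the sparse bit by $2 n_{j+1}$.

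Given these tools, uniform recurrence follows quickly. Let $W$ be a factor of $U$ of length $n$ and choose $j$ with $m_j > n$. Writing any starting position of $W$ as $p = k m_j + s$ with $1 \leq s \leq m_j$, the factor $W$ sits in the two-block window $B_j c_{k+1} B_j c_{k+2}$ at local position $s$. The bound $n \leq m_j - 1$ ensures that the bit $c_{k+2}$ is never reached, so $W$ is determined either by $s$ alone (if it lies inside a single copy of $B_j$) or by the pair $(s, c_{k+1})$. Hence $W$ reappears at every position $k' m_j + s$ for which $c_{k'+1}$ equals the required bit, and by the previous paragraph such $k'$ occur with gaps at most $2 n_{j+1}$. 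Consequently successive occurrences of $W$ in $U$ are at distance at most $2 m_{j+1}$, proving uniform recurrence.

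The main obstacle is the structural description of $(c_r)$ and the resulting gap bounds between equal bits; everything else is bookkeeping around the block decomposition.
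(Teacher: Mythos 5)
Your proof is correct, and it rests on the same block decomposition $U = B_j c_1 B_j c_2 \cdots$ with $B_j = \pref{m_j - 1}(U)$ that underlies the paper's argument (your divisibility check that $a_{k m_j + i} = a_i$ for $0 < i < m_j$ is exactly the fact the paper expresses as $U \in \{\pref{m_j - 1}(U)0, \pref{m_j - 1}(U)1\}^\omega$). The difference is in how the separator bits $c_r$ are handled. You fix the smallest $j$ with $m_j > |W|$, so an occurrence of $W$ may straddle a separator, and you then genuinely need your second step: the observation that $(c_r)$ is, up to complementation, the same construction over the shifted parameters $(n_{j+1}, n_{j+2}, \dots)$, whence each bit value recurs in $(c_r)$ with gaps at most $2 n_{j+1}$ and $W$ recurs with gaps at most $2 m_{j+1}$; I checked this and it is sound. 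The paper sidesteps all of this by choosing $j$ large enough that $u$ is already a factor of $\pref{m_j - 1}(U)$ (possible since $m_j \to \infty$, so some prefix of that form covers an occurrence of $u$); then $u$ sits at a fixed offset inside every block, consecutive blocks start $m_j$ apart, and every window of length $m_j + |u| - 1$ contains an occurrence of $u$ with no analysis of the $c_r$ whatsoever. Your route costs an extra lemma but buys a recurrence bound of roughly $2 m_{j+1}$ for the least $j$ with $m_j > |W|$, i.e.\ a bound depending on $W$ only through its length, whereas the paper's choice of $j$ depends on where $u$ first occurs and can in principle be larger.
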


\begin{proof}
For every factor $u$ of $U$,
there is a $j$ such that $u$ is a factor of $\pref{m_j - 1}(U)$.
Because $U \in \{\pref{m_j - 1}(U) 0, \pref{m_j - 1}(U) 1\}^\omega$,
every factor of $U$ of length $m_j + |u| - 1$ contains $u$.
\qed\end{proof}

\begin{lemma} \label{lem:fcu}
For every $n$, let $n'$ be such that $m_{n' - 1} < n \leq m_{n'}$.
Then
\begin{equation*}
   \fc{1}{U}(n) \leq n' + 1.
\end{equation*}
For all $J \geq 1$, if $n = 2 \sum_{j = 1}^J (m_{2 j} - m_{2 j - 1})$, then
\begin{equation*}
   \fc{1}{U}(n) \geq \frac{n' + 1}{2}.
\end{equation*}
For all $j \geq 1$,
\begin{equation*}
   \fc{1}{U}(m_j) = 2.
\end{equation*}
\end{lemma}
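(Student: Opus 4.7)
My plan is to prove the three parts of Lemma~\ref{lem:fcu} separately: Parts~1 and~3 follow from the self-similar morphism structure of $U$, while Part~2 requires a more delicate carry analysis.

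For the upper bound $\fc{1}{U}(n)\le n'+1$ I proceed by induction on $n'$. The key observation is that $U=h(V)$, where $V$ is the analogous word built from the shifted parameters $n_2,n_3,\dots$ and $h\colon 0\mapsto 0^{n_1}$, $1\mapsto 0^{n_1-1}1$ is an $n_1$-uniform morphism that preserves the number of $1$'s. Consequently every factor of $U$ of length $n$ has the same $1$-count as a factor of $V$ of length $L\in\{\lfloor n/n_1\rfloor,\lceil n/n_1\rceil\}$; combined with the fact that the set of $1$-counts of factors of a fixed length in a binary word is always an integer interval (sliding the window by one position changes the count by at most $1$), this gives $\fc{1}{U}(n)\le\fc{1}{V}(\lfloor n/n_1\rfloor)+1$. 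Setting $m^V_i=m_{i+1}/m_1$, one checks that $\lfloor n/n_1\rfloor$ sits in the range where the inductive hypothesis applied to $V$ yields $\fc{1}{V}(\lfloor n/n_1\rfloor)\le n'$, closing the step. The base case $n'=1$ is immediate, since for $n\le n_1$ any length-$n$ factor of $U$ contains at most one letter equal to $1$.

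For Part~3 I parse $U$ as a concatenation $B_1B_2\dots$ of blocks of length $m_j$, where each $B_i=p_j\,x_i$ with $p_j=\pref{m_j-1}(U)$ fixed and $x_i\in\{0,1\}$. Every factor of $U$ of length $m_j$ is a suffix of some $B_i$ followed by the complementary prefix of $B_{i+1}$, and a direct computation shows that its number of $1$'s equals $|p_j|_1+x_i$. The values $x_1=a_{m_j}$ and $x_{n_{j+1}}=a_{m_{j+1}}$ have opposite parities (the greatest $j'$ with $m_{j'}\mid m_j$, resp.\ $m_{j'}\mid m_{j+1}$, is $j$, resp.\ $j+1$), so both possible counts actually occur, giving $\fc{1}{U}(m_j)=2$.

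For the lower bound in Part~2, each term $m_{2k}-m_{2k-1}$ is divisible by $m_1=n_1$, so $n_1\mid n$, and the reduction used in Part~1 becomes an equality $\fc{1}{U}(n)=\fc{1}{V}(q)$ with $q=n/n_1=2\sum_{k=1}^J(m^V_{2k-1}-m^V_{2k-2})$. My plan is to exhibit $J+1$ starting positions $s_0<s_1<\dots<s_J$ whose length-$n$ factors in $U$ have pairwise distinct abelian $1$-counts. Guided by the period-doubling special case, a natural candidate is $s_l=2\sum_{k=1}^l(m_{2k}-m_{2k-1})$; writing $c_i=|\pref{i}(U)|_1$ and using the identity $c_{s+t}-c_s-c_t=\sum_j(-1)^{j+1}[\,s\bmod m_j+t\bmod m_j\ge m_j\,]$, the difference of $1$-counts between the factors at $s_l$ and $s_{l-1}$ can be tracked carry-by-carry, and in the period-doubling case it collapses to exactly $+1$ coming from the index $j=2l+1$. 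Since $n'\le 2J+1$, this would give $\fc{1}{U}(n)\ge J+1\ge(n'+1)/2$. The hard part is that for general $n_j$'s the convenient indicator equalities which isolate this single contribution (for instance $[\,2(n\bmod m_{2l+1})\ge m_{2l+1}\,]=1$) may fail, so the positions $s_l$ have to be adapted to the actual parameter sequence and the cancellations among carry indicators verified more carefully.
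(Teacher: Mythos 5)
Your handling of the first and third claims is correct but follows a genuinely different route from the paper: the paper works throughout with the explicit formula $|\pref{n}(U)|_1=\sum_{i\geq 1}(-1)^{i+1}\lfloor n/m_i\rfloor$ and bounds $|\rfact{l}{n}(U)|_1=|\pref{n+l}(U)|_1-|\pref{l}(U)|_1$ term by term, whereas you exploit the self-similar block structure of $U$ and induct. That works, and the block argument for $\fc{1}{U}(m_j)=2$ is arguably cleaner than the paper's. One slip: with $h\colon 0\mapsto 0^{n_1}$, $1\mapsto 0^{n_1-1}1$ one has $U=h(\overline{V})$ rather than $h(V)$, since the letter of $U$ at position $tn_1$ is $1$ exactly when the greatest $i$ with $m^V_i\mid t$ is \emph{even}; this is harmless because $\fc{1}{V}=\fc{1}{\overline{V}}$ for binary words, but it should be fixed.

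The second claim, however, is not proved. You propose candidate positions $s_l$, observe that the verification reduces to a carry analysis, and then explicitly concede that for general parameter sequences the needed indicator identities ``may fail'' and the cancellations must be ``verified more carefully.'' That verification is the entire content of this part of the lemma and the only genuinely hard step, so the gap is essential: as written, the lower bound $\fc{1}{U}(n)\geq(n'+1)/2$ is not established. For comparison, the paper does not need $J+1$ pairwise distinct counts: since by \eqref{eq:abelcomplbinary} the set of $1$-counts of length-$n$ factors of a binary word is an integer interval, it suffices to exhibit \emph{two} factors whose counts differ by $J$. It takes the single shift $l=m_{2J+1}-n/2$ and shows that the $i$-th term of $|\rfact{l}{n}(U)|_1-|\pref{n}(U)|_1$ reduces, for $i\leq 2J$, to $\lfloor s/m_i\rfloor-\lfloor -s/m_i\rfloor-\lfloor 2s/m_i\rfloor$ with $s=\sum_{1\leq j<(i+1)/2}(m_{2j}-m_{2j-1})$, which the bounds $m_i/2\leq s<m_i$ (for even $i$) and $m_{i-1}/2\leq s<m_{i-1}$ (for odd $i>1$) evaluate exactly; together with the tail $i\geq 2J+1$ the difference comes out to $J$. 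Until you carry out an analogous explicit evaluation for your $s_l$ (or for one well-chosen shift), this part of the lemma remains open in your write-up.
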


\begin{proof}
Formula \eqref{eq:abelcomplbinary} will be used repeatedly in this proof.
Another important simple fact is that
if $a, b, c$ are integers and $c$ divides $a$, then
\begin{math}
   \left\lfloor \mfrac{(a + b)}{c} \right\rfloor
   = \mfrac{a}{c} + \left\lfloor \mfrac{b}{c} \right\rfloor .
\end{math}

For all $n \geq 1$,
\begin{equation*}
   |\pref{n}(U)|_1
   = \sum_{i = 1}^\infty (-1)^{i + 1} \left\lfloor \frac{n}{m_i} \right\rfloor,
\end{equation*}
and for all $n \geq 1$ and $l \geq 0$,
\begin{equation*}
   |\rfact{l}{n}(U)|_1
   = |\pref{n + l}(U)|_1 - |\pref{l}(U)|_1
   = \sum_{i = 1}^\infty (-1)^{i + 1} \left(
       \left\lfloor \frac{n + l}{m_i} \right\rfloor -
       \left\lfloor \frac{l}{m_i} \right\rfloor \right).
\end{equation*}
For all $i$,
\begin{equation*}
   \left\lfloor \frac{n + l}{m_i} \right\rfloor -
       \left\lfloor \frac{l}{m_i} \right\rfloor
   \in \left\{ \left\lfloor \frac{n}{m_i} \right\rfloor,
       \left\lceil \frac{n}{m_i} \right\rceil \right\}.
\end{equation*}
Moreover, for every $l$ there is an $i'$ such that
\begin{equation*}
   \left\lfloor \frac{n + l}{m_i} \right\rfloor -
       \left\lfloor \frac{l}{m_i} \right\rfloor
   = \begin{cases}
       1 & \text{if $n' \leq i \leq i'$} \\
       0 & \text{if $i \geq i'$}
   \end{cases},
\end{equation*}
so
\begin{equation*}
   \sum_{i = n'}^\infty (-1)^{i + 1} \left(
       \left\lfloor \frac{n + l}{m_i} \right\rfloor -
       \left\lfloor \frac{l}{m_i} \right\rfloor \right)
   \in \left\{ 0, (-1)^{n' + 1} \right\}.
\end{equation*}
Thus there are at most $n' + 1$ possible values for $|\rfact{l}{n}(U)|_1$
and $\fc{1}{U}(n) \leq n' + 1$.

Consider the second claim.
Let
\begin{math}
   n = 2 \sum_{j = 1}^J (m_{2 j} - m_{2 j - 1})
\end{math}
and
\begin{math}
   l = m_{2J + 1} - n / 2 .
\end{math}
Then
\begin{equation*}
   |\rfact{l}{n}(U)|_1 - |\pref{n}(U)|_1
   = \sum_{i = 1}^\infty (-1)^{i + 1} \left(
       \left\lfloor \frac{n + l}{m_i} \right\rfloor -
       \left\lfloor \frac{l}{m_i} \right\rfloor -
       \left\lfloor \frac{n}{m_i} \right\rfloor \right)
\end{equation*}
and for $i \leq 2J$
\begin{align*}
   &\left\lfloor \frac{n + l}{m_i} \right\rfloor -
       \left\lfloor \frac{l}{m_i} \right\rfloor -
       \left\lfloor \frac{n}{m_i} \right\rfloor \\
   = &\frac{m_{2J + 1} + \sum_{(i + 1) / 2 \leq j \leq J} (m_{2 j} - m_{2 j - 1})}{m_i}
       + \left\lfloor \frac{\sum_{1 \leq j < (i + 1) / 2} (m_{2 j} - m_{2 j - 1})}{m_i} \right\rfloor \\
   &- \frac{m_{2J + 1} - \sum_{(i + 1) / 2 \leq j \leq J} (m_{2 j} - m_{2 j - 1})}{m_i}
       - \left\lfloor - \frac{\sum_{1 \leq j < (i + 1) / 2} (m_{2 j} - m_{2 j - 1})}{m_i} \right\rfloor \\
   &- \frac{2 \sum_{(i + 1) / 2 \leq j \leq J} (m_{2 j} - m_{2 j - 1})}{m_i}
       - \left\lfloor \frac{2 \sum_{1 \leq j < (i + 1) / 2} (m_{2 j} - m_{2 j - 1})}{m_i} \right\rfloor \\
   = &\left\lfloor \frac{s}{m_i} \right\rfloor
       - \left\lfloor - \frac{s}{m_i} \right\rfloor
       - \left\lfloor \frac{2 s}{m_i} \right\rfloor,
\end{align*}
where $s = \sum_{1 \leq j < (i + 1) / 2} (m_{2 j} - m_{2 j - 1})$.
If $i$ is even, then $m_i / 2 \leq s < m_i$,
and if $i$ is odd and $i > 1$, then $m_{i - 1} / 2 \leq s < m_{i - 1}$.
Thus
\begin{equation*}
   \left\lfloor \frac{s}{m_i} \right\rfloor
       - \left\lfloor - \frac{s}{m_i} \right\rfloor
       - \left\lfloor \frac{2 s}{m_i} \right\rfloor
   = \begin{cases}
       0 & \text{if $i$ is even or $i = 1$} \\
       1 & \text{if $i$ is odd and $i > 1$} \\
   \end{cases}
\end{equation*}
and
\begin{align*}
   \fc{1}{U}(n) &\geq |\rfact{l}{n}(U)|_1 - |\pref{n}(U)|_1 + 1 \\
   &= \sum_{i' = 2}^J (-1)^{(2i' - 1) + 1}
       + \sum_{i = 2J + 1}^\infty (-1)^{i + 1} \left(
           \left\lfloor \frac{n + l}{m_i} \right\rfloor -
           \left\lfloor \frac{l}{m_i} \right\rfloor -
           \left\lfloor \frac{n}{m_i} \right\rfloor \right) + 1\\
   &= J + 1 = \frac{n' + 1}{2}.
\end{align*}

Consider the third claim.
Let $n = m_j$.
Then
\begin{align*}
   |\rfact{l}{n}(U)|_1
   &= \sum_{i = 1}^\infty (-1)^{i + 1} \left(
       \left\lfloor \frac{m_j + l}{m_i} \right\rfloor -
       \left\lfloor \frac{l}{m_i} \right\rfloor \right)\\
   &= \sum_{i = 1}^j (-1)^{i + 1} \frac{m_j}{m_i}
       + \sum_{i = j + 1}^\infty (-1)^{i + 1} \left(
           \left\lfloor \frac{m_j + l}{m_i} \right\rfloor -
           \left\lfloor \frac{l}{m_i} \right\rfloor \right)
\end{align*}
and
\begin{equation*}
   \sum_{i = j + 1}^\infty (-1)^{i + 1} \left(
       \left\lfloor \frac{m_j + l}{m_i} \right\rfloor -
       \left\lfloor \frac{l}{m_i} \right\rfloor \right)
   \in \{0, (-1)^{j}\},
\end{equation*}
so $\fc{1}{U}(n) = 2$.
\qed\end{proof}

If $n_i = 2$ for all $i$, then the word $U$ is the period-doubling word $S$.
Thus Lemma \ref{lem:fcu} gives an alternative proof for Theorem \ref{thm:fcs}.

\begin{theorem} \label{thm:arbslow}
For every increasing unbounded function $f: \Ni \to \Ni$
there is a uniformly recurrent word $w \in \{0, 1\}^\omega$
such that $\fc{k}{w}(n) = O(f(n))$ but $\fc{k}{w}(n)$ is not bounded.
\end{theorem}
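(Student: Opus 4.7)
The plan is to apply the $U$-construction from the preceding lemmas with parameters $(n_i)$ tailored to $f$, and then pass from Abelian to $k$-Abelian complexity via a $k$-uniform morphism that spreads the $1$'s far enough apart for Lemma~\ref{lem:fckfc1} to apply.

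First, I would pick the integers $n_1, n_2, \dots \geq 2$ inductively so fast that $f(m_{j-1} + 1) \geq j + 1$ for every $j \geq 1$; this is possible since $f$ is increasing and unbounded. With this choice, the first part of Lemma~\ref{lem:fcu} gives $\fc{1}{U}(n) \leq n' + 1 = j + 1 \leq f(n)$ whenever $m_{j-1} < n \leq m_j$, so $\fc{1}{U}(n) = O(f(n))$. The second part of the same lemma exhibits a subsequence of lengths on which $\fc{1}{U}$ tends to infinity, so $\fc{1}{U}$ is unbounded. Lemma~\ref{lem:uunrec} guarantees that $U$ is uniformly recurrent.

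Next, let $h$ be the $k$-uniform morphism with $h(0) = 0^k$ and $h(1) = 0^{k-1} 1$, and set $w = h(U)$. By construction, all $1$'s of $w$ sit at positions that are multiples of $k$, so every factor of $w$ of length $k$ contains at most one $1$. Lemma~\ref{lem:fckfc1} then gives $\fc{k}{w}(n) = \Theta(\fc{1}{w}(n))$. A direct offset count shows that the number of $1$'s in a length-$n$ factor of $w$ starting at position $l$ equals $|U[i_1 \ldots i_2]|_1$, where $i_1 = \lceil l/k \rceil$ and $i_2 = \lfloor (l + n - 1)/k \rfloor$, and that $i_2 - i_1 + 1$ takes only two consecutive values as $l$ varies. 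Hence $\fc{1}{w}(n) \leq \fc{1}{U}(\lceil n/k \rceil) + \fc{1}{U}(\lceil n/k \rceil + 1) = O(f(n))$ by monotonicity of $f$. Conversely, at $k$-aligned starting positions with $n = km$ the count becomes $|U[j \ldots j + m - 1]|_1$, so $\fc{1}{w}(km) \geq \fc{1}{U}(m)$, whence $\fc{1}{w}$, and therefore $\fc{k}{w}$, is unbounded. Uniform recurrence of $U$ transfers to $w = h(U)$ because $h$ is a non-erasing uniform morphism: a factor of $w$ of length $n$ lies inside $h(u)$ for some factor $u$ of $U$ of length $\lceil n/k \rceil + 1$, and $u$ occurs in every sufficiently long factor of $U$.

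I do not anticipate a serious obstacle: Lemma~\ref{lem:fcu} already delivers the $k = 1$ case on the bespoke word $U$, and the sparsifying morphism $h$ reduces the general-$k$ case to the $k = 1$ case through Lemma~\ref{lem:fckfc1}. The only point requiring care is ensuring that the sequence $(n_i)$ grows fast enough that the upper bound of Lemma~\ref{lem:fcu} is dominated by $f$ on every interval $(m_{j-1}, m_j]$, and not merely on the sparse subsequence where that upper bound is tight.
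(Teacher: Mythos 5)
Your proposal is correct and follows essentially the same route as the paper: both arguments combine Lemma~\ref{lem:fcu} (with the $n_i$ chosen to grow fast enough relative to $f$), Lemma~\ref{lem:uunrec}, and Lemma~\ref{lem:fckfc1}. The only difference is that your sparsifying morphism $h$ is superfluous: since every $1$ in $U$ occurs at a position divisible by $m_1 = n_1$, simply choosing $n_1 \geq k$ already guarantees that every factor of $U$ of length $k$ contains at most one occurrence of $1$, so Lemma~\ref{lem:fckfc1} applies to $U$ directly.
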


\begin{proof}
Follows from Lemmas \ref{lem:fckfc1}, \ref{lem:uunrec} and \ref{lem:fcu}.
\qed\end{proof}

\section{Conclusion} \label{sec:conclusion}

In this paper we have investigated some generalizations
of the results of Morse and Hedlund and those of Coven and Hedlund
for $k$-Abelian complexity.
We have pointed out many similarities but also many differences.
We have studied the $k$-Abelian complexity of the Thue-Morse word
and proved that there are uniformly recurrent words
with arbitrarily slowly growing $k$-Abelian complexities.

There are many open question and possible directions for future work.
One open problem related to Lemma \ref{lem:perlin} is to
determine the maximal $(k + 1)$-Abelian complexity
of a $k$-Abelian periodic word.
Another interesting topic would be
the $k$-Abelian complexities of morphic words.
For example, Theorem \ref{thm:arbslow} does not hold
if morphic words are considered instead of uniformly recurrent words,
because the $k$-Abelian complexity of a morphic word
is always a computable function.
But for a morphic (or pure morphic) word $w$, how slowly can
\begin{math}
   \fcu{k}{w}(n)
\end{math}
grow without being bounded?
Can it grow slower than logarithmically?
More generally, can the possible $k$-Abelian complexities
of some subclass of morphic words be classified?


\begin{thebibliography}{10}

\bibitem{CaRiSaZa11}
Julien Cassaigne, Gwenael Richomme, Kalle Saari, and Luca~Q. Zamboni.
\newblock Avoiding {A}belian powers in binary words with bounded {A}belian
 complexity.
\newblock {\em Internat. J. Found. Comput. Sci.}, 22(4):905--920, 2011.

\bibitem{CoHe73}
Ethan~M. Coven and G.~A. Hedlund.
\newblock Sequences with minimal block growth.
\newblock {\em Math. Systems Theory}, 7:138--153, 1973.

\bibitem{CuRa11abelian}
James Currie and Narad Rampersad.
\newblock Recurrent words with constant {A}belian complexity.
\newblock {\em Adv. in Appl. Math.}, 47(1):116--124, 2011.

\bibitem{Da00}
David Damanik.
\newblock Local symmetries in the period-doubling sequence.
\newblock {\em Discrete Appl. Math.}, 100(1--2):115--121, 2000.

\bibitem{De79}
F.~M. Dekking.
\newblock Strongly nonrepetitive sequences and progression-free sets.
\newblock {\em J. Combin. Theory Ser. A}, 27(2):181--185, 1979.

\bibitem{HuKa11}
Mari Huova and Juhani Karhum{\"a}ki.
\newblock Observations and problems on $k$-abelian avoidability.
\newblock In {\em Combinatorial and Algorithmic Aspects of Sequence Processing
 (Dagstuhl Seminar 11081)}, pages 2215--2219, 2011.

\bibitem{HuKaSa12ehrenfeucht}
Mari Huova, Juhani Karhum{\"a}ki, and Aleksi Saarela.
\newblock Problems in between words and abelian words: $k$-abelian
 avoidability.
\newblock {\em Theoret. Comput. Sci.}, 454:172--177, 2012.

\bibitem{HuKaSaSa11maurer}
Mari Huova, Juhani Karhum{\"a}ki, Aleksi Saarela, and Kalle Saari.
\newblock Local squares, periodicity and finite automata.
\newblock In Cristian Calude, Grzegorz Rozenberg, and Arto Salomaa, editors,
 {\em Rainbow of Computer Science}, pages 90--101. Springer, 2011.

\bibitem{KaPuSa12dlt}
Juhani Karhum{\"a}ki, Svetlana Puzynina, and Aleksi Saarela.
\newblock {F}ine and {W}ilf's theorem for $k$-abelian periods.
\newblock In {\em Proceedings of the 16th International Conference on
 Developments in Language Theory}, pages 296--307, 2012.

\bibitem{KaSaZa_jcta}
Juhani Karhum{\"a}ki, Aleksi Saarela, and Luca Zamboni.
\newblock On a generalization of {A}belian equivalence and complexity of
 infinite words, Submitted.

\bibitem{Ke92}
Veikko Ker{\"a}nen.
\newblock Abelian squares are avoidable on {$4$} letters.
\newblock In {\em Proceedings of the 19th International Colloquium on Automata,
 Languages and Programming}, pages 41--52, 1992.

\bibitem{Mesa13dlt}
Robert Merca{\c s} and Aleksi Saarela.
\newblock $3$-abelian cubes are avoidable on binary alphabets, Submitted.

\bibitem{MoHe38}
Marston Morse and Gustav~A. Hedlund.
\newblock Symbolic dynamics.
\newblock {\em Amer. J. Math.}, 60(4):815--866, 1938.

\bibitem{MoHe40}
Marston Morse and Gustav~A. Hedlund.
\newblock Symbolic dynamics {II}: {S}turmian trajectories.
\newblock {\em Amer. J. Math.}, 62(1):1--42, 1940.

\bibitem{PuZa13}
Svetlana Puzynina and Luca~Q. Zamboni.
\newblock Abelian returns in {S}turmian words.
\newblock {\em J. Combin. Theory, Ser. A}, 120(2):390--408, 2013.

\bibitem{RiSaZa10}
Gwenael Richomme, Kalle Saari, and Luca~Q. Zamboni.
\newblock Balance and {A}belian complexity of the {T}ribonacci word.
\newblock {\em Adv. in Appl. Math.}, 45(2):212--231, 2010.

\bibitem{RiSaZa11}
Gwenael Richomme, Kalle Saari, and Luca~Q. Zamboni.
\newblock Abelian complexity of minimal subshifts.
\newblock {\em J. Lond. Math. Soc. (2)}, 83(1):79--95, 2011.

\bibitem{Sa09jalc}
Aleksi Saarela.
\newblock Ultimately constant abelian complexity of infinite words.
\newblock {\em J. Autom. Lang. Comb.}, 14(3--4):255--258, 2009.

\end{thebibliography}

\end{document}